\begin{document}

\def\cO{\mathcal{O}}
\def\cS{\mathcal{S}}
\def\cX{\mathcal{X}}
\def\cY{\mathcal{Y}}
\def\cSp{\mathcal{S}'}
\def\sJ{J}
\def\sL{L}
\def\sR{R}

\newcommand{\removableFootnote}[1]{}

\newtheorem{theorem}{Theorem}[section]
\newtheorem{lemma}[theorem]{Lemma}
\theoremstyle{remark}
\newtheorem{remark}{Remark}[section]

\title{
Sequences of Periodic Solutions and Infinitely Many Coexisting Attractors in the Border-Collision Normal Form.
}
\author{
D.J.W.~Simpson\\
Institute of Fundamental Sciences\\
Massey University\\
Palmerston North\\
New Zealand}
\maketitle

\begin{abstract}
The border-collision normal form is a piecewise-linear continuous map on $\mathbb{R}^N$
that describes dynamics near border-collision bifurcations of nonsmooth maps.
This paper studies a codimension-three scenario at which the border-collision normal form with $N=2$ exhibits infinitely many attracting periodic solutions.
In this scenario there is a saddle-type periodic solution with branches of stable and unstable manifolds that are coincident,
and an infinite sequence of attracting periodic solutions that converges to an orbit homoclinic to the saddle-type solution.
Several important features of the scenario are shown to be universal, and three examples are given.
For one of these examples infinite coexistence is proved directly by explicitly computing periodic solutions in the infinite sequence.
\end{abstract}


\section{Introduction}
\label{sec:intro}

For a map that is smooth except on codimension-one switching manifolds where it is only continuous,
a border-collision bifurcation occurs when a fixed point of the map collides with a switching manifold under parameter change,
and local to the bifurcation the map is asymptotically piecewise-linear \cite{DiBu08,ZhMo03,DiFe99}.
Except in special cases, dynamical behavior near a border-collision bifurcation is completely determined by the linear components.
Upon omitting higher order terms and introducing convenient local coordinates, in two dimensions the map may be written as
\begin{equation}
\begin{gathered}
\left[ \begin{array}{c} x_{i+1} \\ y_{i+1} \end{array} \right] = 
\left\{ \begin{array}{lc}
f^{\sL}(x_i,y_i) \;, & x_i \le 0 \\
f^{\sR}(x_i,y_i) \;, & x_i \ge 0
\end{array} \right. \;, \\
f^{\sJ}(x,y) = A_{\sJ}
\left[ \begin{array}{c} x \\ y \end{array} \right] +
\left[ \begin{array}{c} 1 \\ 0 \end{array} \right] \mu \;, \qquad
A_{\sJ} = \left[ \begin{array}{cc} \tau_{\sJ} & 1 \\ -\delta_{\sJ} & 0 \end{array} \right] \;, \qquad
\sJ \in \{ \sL, \sR \} \;.
\end{gathered}
\label{eq:f}
\end{equation}
The two-dimensional border-collision normal form (\ref{eq:f}) is piecewise-linear and continuous on the single switching manifold, $x=0$.
The four parameters, $\tau_{\sL}$, $\delta_{\sL}$, $\tau_{\sR}$ and $\delta_{\sR}$, may take any value in $\mathbb{R}$.
The remaining parameter $\mu$ controls the border-collision bifurcation.
The bifurcation occurs at $\mu = 0$, and in the context of border-collision $\mu$ is presumed to be small.
However, for $\mu \ne 0$ the structure of the dynamics of (\ref{eq:f}) is independent of the magnitude of $\mu$
because the half-maps $f^{\sL}$ and $f^{\sR}$ are affine.
All bounded invariant sets of (\ref{eq:f}) collapse to the origin as $\mu \to 0$.
Hence for the purposes of determining the behavior of (\ref{eq:f}) it suffices to assume $\mu \in \{ -1, 0, 1 \}$.
In $N$ dimensions, the matrices in the border-collision normal form are $N \times N$ companion matrices
and there are a total of $2 N$ parameters in addition to $\mu$ \cite{Di03}.

The construction and basic properties of (\ref{eq:f}) were first described by Nusse and Yorke \cite{NuYo92}\removableFootnote{
though some study of piecewise-linear continuous maps had been performed earlier,
refer to references within \cite{DiFe99,MiGa96}
}.
The border-collision normal form also arises as a Poincar\'{e} map for corner-collisions in Filippov systems \cite{DiBu01c},
as well as grazing-sliding bifurcations in these systems,
albeit with the restriction that one of the two matrices in the map has a zero eigenvalue \cite{DiKo02,KuRi03,Co08b}.
Recently it has been shown that dynamics near a grazing-sliding bifurcation in an $(N+2)$-dimensional system
may be partially captured by the $N$-dimensional border-collision normal form
because sliding motion relates to a loss of dimension \cite{GlJe12}.
Various piecewise-linear continuous maps (that may be transformed to the normal form)
have been used as mathematical models, particularly in social sciences \cite{PuSu06}.

Since (\ref{eq:f}) is piecewise-linear, it is extremely nonlinear yet relatively amenable to an exact analysis.
Glendinning and Wong showed that (\ref{eq:f}) may exhibit an attractor that fills a two-dimensional region
of phase space by constructing Markov partitions to calculate this region exactly \cite{GlWo11}.
Arnold tongues of (\ref{eq:f}) typically display a chain structure with points of zero width
at which there exists an invariant polygon \cite{ZhMo06b,SuGa08,SiMe09}. 
The map (\ref{eq:f}) may have a unique fixed point for all $\mu$
that is asymptotically stable for all $\mu \ne 0$, yet unstable when $\mu = 0$ \cite{Do07,HaAb04}.
Additional dynamics is possible when (\ref{eq:f}) is non-invertible \cite{MiGa96},
such as snap-back repellers which imply chaotic dynamics
and the coexistence of infinitely many unstable periodic solutions \cite{Gl10}.

Several authors have described coexisting attractors for (\ref{eq:f}).
Since bounded attractors converge to the origin as $\mu \to 0$,
coexistence produces an unavoidable uncertainty near the border-collision bifurcation in the presence of small noise \cite{DuNu99}.
The coexistence of six attracting periodic solutions for (\ref{eq:f}) was noted briefly in \cite{Si10}.
The purpose of this paper is to show that (\ref{eq:f}) may exhibit infinitely many attracting periodic solutions,
and to describe conditions on the parameter values that indicate when this phenomenon may occur.

To study orbits of (\ref{eq:f}) it is helpful to consider symbol sequences, $\cS : \mathbb{Z} \to \{ \sL, \sR \}$.
We can associate such a symbol sequence to any orbit of (\ref{eq:f}) by setting
$\cS_i = \sL$ if $x_i < 0$ and
$\cS_i = \sR$ if $x_i > 0$, for all $i \in \mathbb{Z}$.
(If $x_i = 0$, it is convenient to place no restriction on $\cS_i$ because (\ref{eq:f}) is a continuous map.)
Conversely, given a symbol sequence $\cS$ and an initial point $(x_0,y_0)$,
we can define a forward orbit that follows $\cS$ by setting $(x_{i+1},y_{i+1}) = f^{\cS_i}(x_i,y_i)$\removableFootnote{
If (\ref{eq:f}) is invertible we may define a full orbit.
}.
In general, iterating the two half-maps of (\ref{eq:f}) in this fashion
produces a different orbit than iterations of (\ref{eq:f}).
However, if the resulting orbit is {\it admissible},
that is $x_i \le 0$ whenever $\cS_i = \sL$, and $x_i \ge 0$ whenever $\cS_i = \sR$,
then the orbit is identical to that produced by iterating (\ref{eq:f}).
An orbit that is not admissible is said to be {\it virtual}.
If $\cS$ is periodic, a periodic solution that follows $\cS$ is referred to as an {\it $\cS$-cycle}.

\begin{figure}[t!]
\begin{center}
\setlength{\unitlength}{1cm}
\begin{picture}(15,7.5)
\put(0,0){\includegraphics[height=7.5cm]{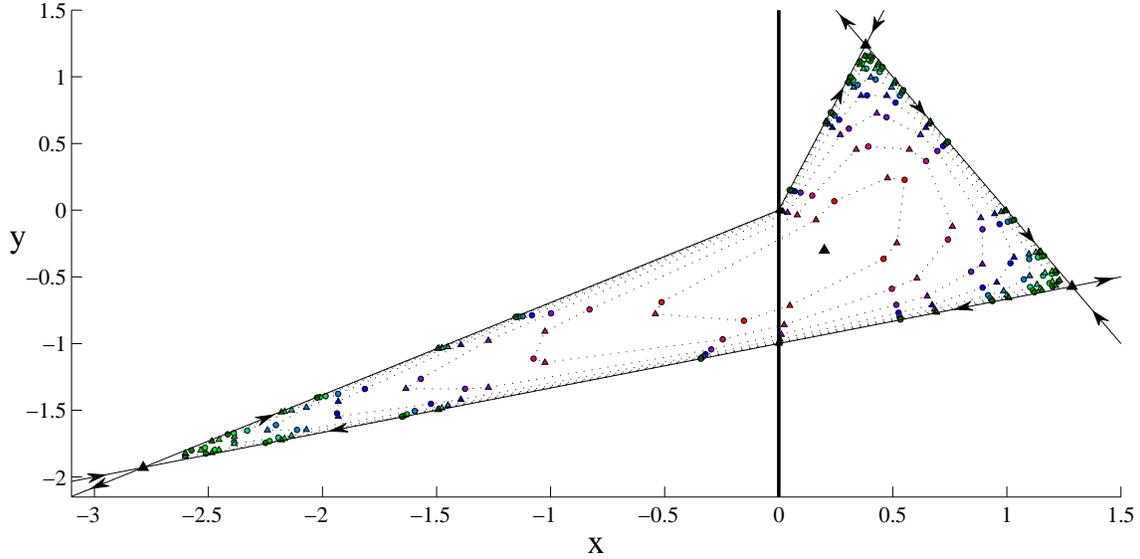}}
\end{picture}
\caption{
A phase portrait of the two-dimensional border-collision normal form (\ref{eq:f}) with $\mu = 1$ and (\ref{eq:paramF}), at which there are infinitely many attractors.
There is a $\sR \sL \sR$-cycle (period-$3$ solution with symbol sequence $\sR \sL \sR$) of saddle-type.
Branches of the stable and unstable manifolds of the $\sR \sL \sR$-cycle (with stability indicated by arrows) are coincident 
and together with the $\sR \sL \sR$-cycle form an invariant quadrilateral.
For each $k \in \mathbb{Z}^+$, there is an attracting $\cS[k]$-cycle, where $\cS[k] = (\sR \sL \sR)^k \sL \sR$, (\ref{eq:SRLRkLR}), 
and a saddle-type $\cS'[k]$-cycle, where $\cS'[k] = (\sR \sL \sR)^k \sR \sR$, (\ref{eq:SRLRkRR}).
The $\cS[k]$ and $\cS'[k]$-cycles are indicated by small circles and triangles, respectively, up to $k = 8$.
So that these periodic solutions may be distinguished clearly,
for each $k$, points of the $\cS[k]$ and $\cS'[k]$-cycles are connected with dotted line segments.
Take care to note that these line segments are not invariants and do not relate to dynamics of the map.
Also shown is the fixed point of $f^{\sR}$, $\left( \frac{1}{5},\frac{-3}{10} \right)$,
which lies in the right half-plane and thus is a fixed point of (\ref{eq:f}).
\label{fig:infFa}
}
\end{center}
\end{figure}

As an example, (\ref{eq:f}) has infinitely many attracting periodic solutions when $\mu = 1$ and the remaining parameter values are given by
\begin{equation}
\tau_{\sL} = -\frac{55}{117} \;, \qquad
\delta_{\sL} = \frac{4}{9} \;, \qquad
\tau_{\sR} = -\frac{5}{2} \;, \qquad
\delta_{\sR} = \frac{3}{2} \;.
\label{eq:paramF}
\end{equation}
Fig.~\ref{fig:infFa} shows a phase portrait.
Attracting periodic solutions are $\cS[k]$-cycles, where $k \in \mathbb{Z}^+$ (the set of positive integers) and
\begin{equation}
\cS[k] = (\sR \sL \sR)^k \sL \sR \;,
\label{eq:SRLRkLR}
\end{equation}
is a periodic symbol sequence of period $3k+2$.
This example exhibits several features that in later sections are shown to be universal.
There is an $\sR \sL \sR$-cycle of saddle-type;
specifically its stability multipliers are $\frac{6}{13}$ and $\frac{13}{6}$.
As $k \to \infty$, the $\cS[k]$-cycles approach an orbit that is homoclinic to the $\sR \sL \sR$-cycle.
In comparison, near homoclinic and heteroclinic orbits of ODEs there may be infinitely many attractors \cite{PaTa93,Ch97}, 
and for area-preserving maps there may be infinitely many elliptic periodic solutions \cite{GoSh00}.
However, in Fig.~\ref{fig:infFa} the intersection of the stable and unstable manifolds of the $\sR \sL \sR$-cycle is non-transversal.
The branches of the stable and unstable manifolds that intersect are coincident and there is no topological horseshoe.
Furthermore, for every $k \in \mathbb{Z}^+$,
there exist saddle-type $\cS'[k]$-cycles, where
\begin{equation}
\cS'[k] = (\sR \sL \sR)^k \sR \sR \;.
\label{eq:SRLRkRR}
\end{equation}
Each $\cS'[k]$ differs from $\cS[k]$ by a single symbol.
The stable manifolds of the $\cS'[k]$-cycles appear to form the boundaries of the basins of attraction of the $\cS[k]$-cycles
that are shown in Fig.~\ref{fig:infFb}.

The remainder of this paper is organized as follows.
Conditions for the existence, admissibility and stability of periodic solutions to (\ref{eq:f}) are given in \S\ref{sec:periodic}.
Periodic solutions may be found by solving linear matrix equations because any composition of $f^{\sL}$ and $f^{\sR}$ is affine.
Sequences of symbol sequences of the form $\cS[k] = \cX^k \cY$ are considered in \S\ref{sec:necessary}.
The main result of this section is Theorem \ref{th:codim3} that
gives consequences of the existence of infinitely many stable $\cS[k]$-cycles
when the $\cX$-cycle is of saddle-type (as in Fig.~\ref{fig:infFa} for which $\cX = \sR \sL \sR$).
The theorem reveals three necessary conditions on the parameter values, from which we find that this scenario is codimension-three,
and that the $\cS[k]$-cycles limit to a homoclinic orbit as $k \to \infty$.
In \S\ref{sec:further}, additional assumptions are placed on $\cX$ and $\cY$
leading to further consequences, such as the coincidence of branches of the stable and unstable manifolds of the $\cX$-cycle.
In \S\ref{sec:finding}, the results are used to obtain parameter values of infinite coexistence
for three different choices of $\cX$ and $\cY$.
For one of these choices, corresponding to Figs.~\ref{fig:infFa} and \ref{fig:infFb},
the existence of attracting $\cS[k]$-cycles and saddle-type $\cS'[k]$-cycles for all $k \in \mathbb{Z}^+$
is demonstrated formally in \S\ref{sec:sufficient}.
Conclusions and future directions are discussed in \S\ref{sec:conc}.

\begin{figure}[t!]
\begin{center}
\setlength{\unitlength}{1cm}
\begin{picture}(15,7.5)
\put(0,0){\includegraphics[height=7.5cm]{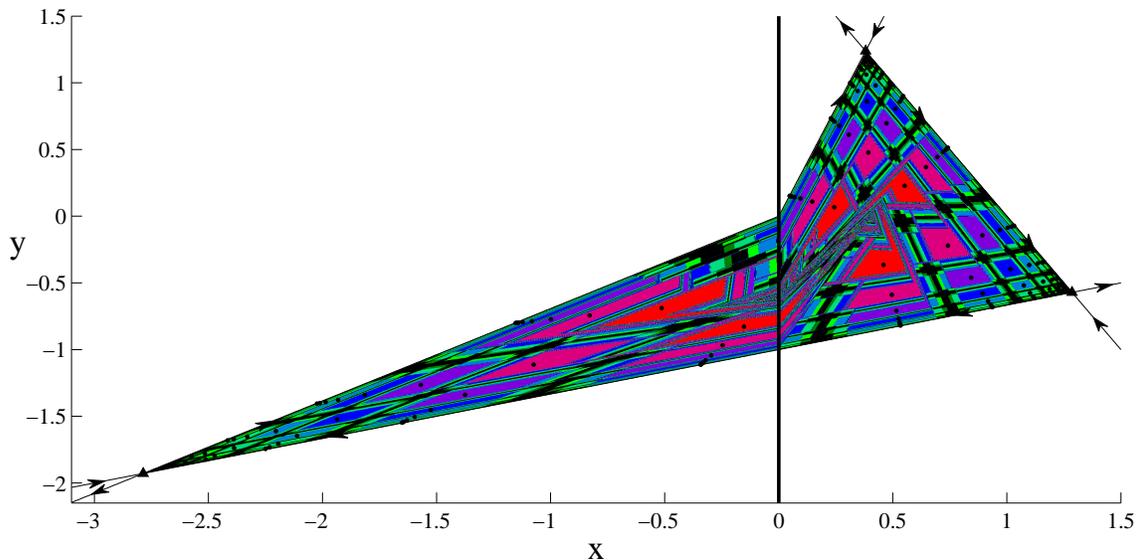}}
\end{picture}
\caption{
Basins of attraction of the $\cS[k]$-cycles of Fig.~\ref{fig:infFa} up to $k = 8$
computed numerically by iterating (\ref{eq:f}) from a $2048 \times 1536$ grid of initial points.
The color of each of the eight basins matches the color of the $\cS[k]$-cycle shown in Fig.~\ref{fig:infFa}, with red for $k=1$ and dark green for $k=8$.
Initial points are shaded white if the forward orbit appeared to diverge,
and shaded black if the forward orbit appeared to neither converge to an $\cS[k]$-cycle with $k \le 8$, or diverge.
For clarity, the $\cS'[k]$-cycles are not shown.
\label{fig:infFb}
}
\end{center}
\end{figure}

\section{Periodic solutions}
\label{sec:periodic}

Calculations of periodic solutions of $N$-dimensional piecewise-linear continuous maps
are given in \cite{SiMe09,Si10,SiMe10}.
In this section these calculations are summarized for the two-dimensional normal form (\ref{eq:f}).



Let $\cS : \mathbb{Z} \to \{ \sL, \sR \}$ be a periodic symbol sequence with minimal period $n \ge 1$
(that is, $\cS_{i+n} = \cS_i$, for all $i \in \mathbb{Z}$,
and $\cS$ does not exhibit this property for a smaller value of $n$).
Then the word $\cS_0 \cdots \cS_{n-1}$ is {\it primitive}, that is, cannot be written as a power
(e.g.~$\sR \sL \sR$ is primitive, but $\sR \sL \sR \sL = (\sR \sL)^2$ is not).
Conversely, given a primitive word of length $n$,
the sequence defined by the infinite repetition of this word has minimal period $n$.
For this reason, throughout this paper whenever we write a periodic symbol sequence,
as in (\ref{eq:SRLRkLR}) and (\ref{eq:SRLRkRR}), we list the symbols $\cS_0 \cdots \cS_{n-1}$\removableFootnote{
Note:
(i) sequences are infinite, e.g.~$\cS = \cdots \sR \sL \sR \sR \sL \sR \cdots$,
(ii) but for brevity we write periodic symbol sequences as, e.g.~$\cS = \sR \sL \sR$.
This is misleading because (iii) a finite list of symbols is a word.
The notation is okay because (iv) periodic symbol sequences of minimal period $n$ are isomorphic(?) to primitive words of length $n$.
}.

Let
\begin{equation}
f^{\cS} = f^{\cS_{n-1}} \circ \cdots \circ f^{\cS_0} \;,
\label{eq:fS}
\end{equation}
denote the $n^{\rm th}$ iterate of (\ref{eq:f}) following $\cS$.
A straight-forward expansion leads to
\begin{equation}
f^{\cS}(x,y) = M_{\cS} \left[ \begin{array}{c} x \\ y \end{array} \right] +
P_{\cS} \left[ \begin{array}{c} 1 \\ 0 \end{array} \right] \mu \;,
\label{eq:fS2}
\end{equation}
where
\begin{equation}
M_{\cS} = A_{\cS_{n-1}} \cdots A_{\cS_0} \;, \qquad
P_{\cS} = I + A_{\cS_{n-1}} + A_{\cS_{n-1}} A_{\cS_{n-2}} +
\cdots + A_{\cS_{n-1}} \cdots A_{\cS_1} \;.
\label{eq:MSPS}
\end{equation}
Let $\cS^{(i)}$ denote the $i^{\rm th}$ left shift permutation of $\cS$
(e.g.~if $\cS = \sL \sL \sL \sR \sR$, then $\cS^{(2)} = \sL \sR \sR \sL \sL$).
The $i^{\rm th}$ point of an $\cS$-cycle, denoted $\left( x^{\cS}_i,y^{\cS}_i \right)$,
is a fixed point of $f^{\cS^{(i)}}$.
When $I - M_{\cS^{(i)}}$ is non-singular, this point is unique.
Since the spectrum of $I - M_{\cS^{(i)}}$ is independent of $i$\removableFootnote{
Let $\lambda$ be an eigenvalue of $I - M_{\cS}$.\\
Then $(I - M_{\cS}) v = \lambda v$, for some nonzero vector $v \in \mathbb{R}^2$.\\
By multiplying both sides of this equation by $A_{\cS_0}$ on the left, we obtain
$\left( I - M_{\cS^{(1)}} \right) A_{\cS_0} v = \lambda A_{\cS_0} v$.\\
Therefore $\lambda$ is an eigenvalue of $I - M_{\cS^{(1)}}$.\\
By either a repetition or generalization of this argument we conclude
that $\lambda$ is an eigenvalue of $I - M_{\cS^{(i)}}$, for any $i$.
},
we have the following result.

\begin{lemma}[Existence]
The $\cS$-cycle is unique if and only if $\det \left( I - M_{\cS} \right) \ne 0$.
Moreover, if $\det \left( I - M_{\cS} \right) \ne 0$, then for each $i$,
\begin{equation}
\left[ \begin{array}{c} x^{\cS}_i \\ y^{\cS}_i \end{array} \right] =
\left( I - M_{\cS^{(i)}} \right)^{-1} P_{\cS^{(i)}} \left[ \begin{array}{c} 1 \\ 0 \end{array} \right] \mu \;.
\label{eq:existence}
\end{equation}
\label{le:existence}
\end{lemma}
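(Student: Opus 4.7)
The plan is to reduce the statement to a linear-algebra exercise. An $\cS$-cycle, by definition, has $\left(x^{\cS}_i, y^{\cS}_i\right)$ as a fixed point of $f^{\cS^{(i)}}$; substituting the affine representation (\ref{eq:fS2}) applied to $\cS^{(i)}$ rewrites this fixed-point equation as the $2 \times 2$ linear system
\[
\left( I - M_{\cS^{(i)}} \right) \begin{bmatrix} x^{\cS}_i \\ y^{\cS}_i \end{bmatrix} = P_{\cS^{(i)}} \begin{bmatrix} 1 \\ 0 \end{bmatrix} \mu \;.
\]
A unique solution exists iff $\det\!\left( I - M_{\cS^{(i)}} \right) \ne 0$, and when this holds the formula (\ref{eq:existence}) is obtained by left-multiplying by the inverse.

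To state the invertibility hypothesis in terms of $\cS$ alone, I would next show that $\det\!\left( I - M_{\cS^{(i)}} \right)$ does not depend on $i$. Writing $M_{\cS} = B A_{\cS_0}$ and $M_{\cS^{(1)}} = A_{\cS_0} B$ with $B = A_{\cS_{n-1}} \cdots A_{\cS_1}$, the classical identity $\det(I - AB) = \det(I - BA)$ gives the case $i = 1$, and iterating covers all $i$; equivalently, the eigenvalue-shift argument already sketched in the footnote yields the same conclusion. Consequently the single condition $\det\!\left( I - M_{\cS} \right) \ne 0$ controls uniqueness of the fixed point at every index.

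It then remains to verify that the individual fixed points assemble into one genuine periodic orbit, which follows from the intertwining $f^{\cS^{(i+1)}} \circ f^{\cS_i} = f^{\cS_i} \circ f^{\cS^{(i)}}$: the half-map $f^{\cS_i}$ carries the unique fixed point of $f^{\cS^{(i)}}$ onto the unique fixed point of $f^{\cS^{(i+1)}}$, so the formula (\ref{eq:existence}) really does parametrise a single periodic orbit of (\ref{eq:f}). The converse direction is automatic: if $\det\!\left( I - M_{\cS} \right) = 0$, the above linear system (taken at $i = 0$) has either no solutions or an entire affine subspace of solutions, so a unique $\cS$-cycle cannot exist. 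The main step to double-check will be the shift invariance of the determinant; this is essentially a one-line consequence of the $\det(I - AB) = \det(I - BA)$ identity, so no genuine obstacle is anticipated.
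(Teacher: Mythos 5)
Your proposal is correct and follows essentially the same route as the paper: the paper obtains the lemma by observing that the $i^{\rm th}$ point is a fixed point of the affine map $f^{\cS^{(i)}}$, solving the resulting linear system, and noting (via the eigenvalue-shift argument in its footnote, equivalent to your $\det(I-AB)=\det(I-BA)$ identity) that the spectrum of $I - M_{\cS^{(i)}}$ is independent of $i$. Your extra remarks on the intertwining relation and the degenerate case are consistent with, and slightly more explicit than, the paper's presentation.
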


An $\cS$-cycle is admissible if every point lies on the ``correct'' side of $x=0$, or on $x=0$.
The following formula results from manipulating (\ref{eq:existence}) (see \cite{Si10,SiMe10})\removableFootnote{
Here $\varrho = (1,1)^{\sf T}$ and $b = (1,0)^{\sf T}$,
thus $\varrho^{\sf T} b = 1$.
}:
\begin{equation}
x^{\cS}_i = \frac{\det \left( P_{\cS^{(i)}} \right) \mu}{\det(I - M_{\cS})} \;.
\nonumber
\end{equation}
Admissibility is therefore determined by the signs of $\det \left( P_{\cS^{(i)}} \right)$,
as described in the following lemma\removableFootnote{
Furthermore, if the $\cS$-cycle is admissible,
then the $n$ points are distinct because $n$ is the minimal period.
}.

\begin{lemma}[Admissibility]
Suppose $\mu \ne 0$ and $\det(I - M_{\cS}) \ne 0$.
Then the $\cS$-cycle is an admissible periodic solution of (\ref{eq:f}) if and only if, whenever
$\det \left( P_{\cS^{(i)}} \right) \ne 0$,
\begin{equation}
\begin{gathered}
{\rm if~} \cS_i = {\sL}, {\rm ~then~}
{\rm sgn} \left( \det \left( P_{\cS^{(i)}} \right) \right) = -{\rm sgn} \left( \mu \det \left( I - M_{\cS} \right) \right) \;, \\
{\rm and~if~} \cS_i = {\sR}, {\rm ~then~}
{\rm sgn} \left( \det \left( P_{\cS^{(i)}} \right) \right) = {\rm sgn} \left( \mu \det \left( I - M_{\cS} \right) \right) \;.
\nonumber
\end{gathered}
\end{equation}
\label{le:admissibility}
\end{lemma}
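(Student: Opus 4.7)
The plan is to derive the explicit scalar formula
\[ x_i^\cS = \frac{\mu \det\left(P_{\cS^{(i)}}\right)}{\det(I - M_\cS)} \]
stated (without proof) immediately before the lemma, and then simply translate admissibility into the sign conditions. Starting from Lemma \ref{le:existence}, $\left( x_i^\cS, y_i^\cS \right)^{\sf T}$ is the unique solution of $(I - M_{\cS^{(i)}})(x,y)^{\sf T} = \mu P_{\cS^{(i)}} (1,0)^{\sf T}$, so Cramer's rule expresses $x_i^\cS$ as $\mu$ times a $2 \times 2$ determinant divided by $\det(I - M_{\cS^{(i)}})$. The footnote after (\ref{eq:MSPS}) shows that $I - M_{\cS^{(i)}}$ has the same spectrum for every $i$, so the denominator equals $\det(I - M_\cS)$.

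The main step is to collapse the Cramer numerator to $\det(P_{\cS^{(i)}})$. This exploits the companion structure of the matrices: for both $J \in \{\sL, \sR\}$ one has $A_J (0,1)^{\sf T} = (1,0)^{\sf T}$. Applying this symbol by symbol to the partial products in the expansions (\ref{eq:MSPS}), a short telescoping computation yields the identity
\[ (I - M_{\cS^{(i)}}) \begin{pmatrix} 0 \\ 1 \end{pmatrix} = P_{\cS^{(i)}} \begin{pmatrix} 0 \\ 1 \end{pmatrix} - P_{\cS^{(i)}} \begin{pmatrix} 1 \\ 0 \end{pmatrix}. \]
Substituting this into the Cramer numerator makes the second column a difference of the two columns of $P_{\cS^{(i)}}$; an elementary column operation followed by linearity in the first column then reduces the numerator to $\mu \det\left(P_{\cS^{(i)}}\right)$, proving the scalar formula. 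Verifying the telescoping identity is the main obstacle, but it is essentially a bookkeeping exercise once the companion identity $A_J (0,1)^{\sf T} = (1,0)^{\sf T}$ is invoked on each factor.

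With the scalar formula in hand, the lemma is essentially definitional. Admissibility of the $\cS$-cycle requires $x_i^\cS \le 0$ when $\cS_i = \sL$ and $x_i^\cS \ge 0$ when $\cS_i = \sR$, for every $i$. Under the standing hypotheses $\mu \ne 0$ and $\det(I - M_\cS) \ne 0$, two cases arise. If $\det\left(P_{\cS^{(i)}}\right) = 0$ then $x_i^\cS = 0$ lies on the switching manifold and admissibility at that index holds automatically, which is precisely why the statement reads \emph{whenever} $\det\left(P_{\cS^{(i)}}\right) \ne 0$. Otherwise, ${\rm sgn}(x_i^\cS) = {\rm sgn}\left(\det\left(P_{\cS^{(i)}}\right)\right) \cdot {\rm sgn}(\mu \det(I - M_\cS))$, and requiring this sign to match the side of $x=0$ prescribed by $\cS_i$ produces the two displayed equations, simultaneously giving both directions of the ``if and only if''.
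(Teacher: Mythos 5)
Your proposal is correct and follows the same route the paper (implicitly) takes: establish the scalar formula $x^{\cS}_i = \mu\det\bigl(P_{\cS^{(i)}}\bigr)/\det\bigl(I-M_{\cS}\bigr)$ and read off the sign conditions, the paper simply citing \cite{Si10,SiMe10} for the formula where you supply a derivation. Your key identity $\bigl(I-M_{\cS^{(i)}}\bigr)(0,1)^{\sf T} = P_{\cS^{(i)}}(0,1)^{\sf T} - P_{\cS^{(i)}}(1,0)^{\sf T}$ does check out (writing $B_j = A_{\cS_{n-1}}\cdots A_{\cS_j}$, the relation $B_j(0,1)^{\sf T} = B_{j+1}(1,0)^{\sf T}$ telescopes the sum defining $P_{\cS^{(i)}}$ exactly as you claim), and the subsequent column operations and the handling of the $\det\bigl(P_{\cS^{(i)}}\bigr)=0$ case are all sound.
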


If no points of an admissible $\cS$-cycle lie on the switching manifold 
then there exists a neighborhood of each point $\left( x^{\cS}_i,y^{\cS}_i \right)$
for which the $n^{\rm th}$ iterate of (\ref{eq:f}) is given by $f^{\cS^{(i)}}$.
In this case the $\cS$-cycle is asymptotically stable
if and only if both eigenvalues of $M_{\cS}$ (these are the stability multipliers of the $\cS$-cycle)
lie inside the unit circle.
For two-dimensional maps it is well-known that stability relates to a particular triangle in the
space of coordinates ${\rm trace} \left( M_{\cS} \right)$
and $\det \left( M_{\cS} \right)$ \cite{El08,Ga07,MeLi01},
and we have the following result.

\begin{lemma}[Stability]
Suppose $\mu \ne 0$, $\det \left( I - M_{\cS} \right) \ne 0$, $\det \left( P_{\cS^{(i)}} \right) \ne 0$ for all $i$,
and the $\cS$-cycle is admissible.
Then the $\cS$-cycle is an asymptotically stable periodic solution of (\ref{eq:f}) if and only if
the following three conditions are satisfied
\begin{align}
\det(M_{\cS}) - {\rm trace}(M_{\cS}) + 1 &> 0 \;, \label{eq:stabConditionSN} \\
\det(M_{\cS}) + {\rm trace}(M_{\cS}) + 1 &> 0 \;, \label{eq:stabConditionPD} \\
\det(M_{\cS}) - 1 &< 0 \;. \label{eq:stabConditionNS}
\end{align}
\label{le:stability}
\end{lemma}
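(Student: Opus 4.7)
The plan is to show that under the stated hypotheses the $\cS$-cycle is isolated from the switching manifold, so that standard linear stability theory for the composite affine map $f^{\cS^{(i)}}$ applies, and then to invoke the familiar Jury (or Schur--Cohn) conditions in two dimensions.

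First, I would verify that no point $\left( x^{\cS}_i, y^{\cS}_i \right)$ lies on the switching manifold $x=0$. The formula displayed immediately before Lemma~\ref{le:admissibility} gives $x^{\cS}_i = \det(P_{\cS^{(i)}}) \mu / \det(I - M_{\cS})$, and the three nondegeneracy hypotheses $\mu \ne 0$, $\det(I - M_{\cS}) \ne 0$, $\det(P_{\cS^{(i)}}) \ne 0$ force $x^{\cS}_i \ne 0$ for every $i$. Together with admissibility, this places $\left( x^{\cS}_i, y^{\cS}_i \right)$ strictly in the open half-plane prescribed by $\cS_i$. Hence there is an open neighborhood $U_i$ of this point on which (\ref{eq:f}) reduces to the single affine map $f^{\cS_i}$.

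Next, I would use continuity to shrink $U_i$ further so that $n$ successive iterates of (\ref{eq:f}) starting in $U_i$ remain, at each stage, in the open half-plane indicated by the corresponding symbol of $\cS^{(i)}$; this is possible because each point of the orbit is strictly away from $x=0$. On this shrunken neighborhood the $n^{\text{th}}$ iterate of (\ref{eq:f}) equals $f^{\cS^{(i)}}$, of which $\left( x^{\cS}_i, y^{\cS}_i \right)$ is a fixed point by Lemma~\ref{le:existence}. Consequently, asymptotic stability of the $\cS$-cycle under (\ref{eq:f}) is equivalent to asymptotic stability of this fixed point under the smooth affine map $f^{\cS^{(i)}}$, which in turn is equivalent to the spectrum of its linear part $M_{\cS^{(i)}}$ being contained in the open unit disk. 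Because $M_{\cS^{(i)}}$ is a cyclic permutation of the product defining $M_{\cS}$, the two matrices share a spectrum, so the criterion is $i$-independent, consistent with the statement.

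Finally, I would invoke the standard stability triangle for a real $2 \times 2$ matrix, as cited in the paper. The characteristic polynomial of $M_{\cS}$ is $p(\lambda) = \lambda^{2} - \operatorname{trace}(M_{\cS})\,\lambda + \det(M_{\cS})$, and both roots lie strictly inside the unit disk if and only if $p(1) > 0$, $p(-1) > 0$, and $p(0) = \det(M_{\cS}) < 1$. These three inequalities are, verbatim, (\ref{eq:stabConditionSN}), (\ref{eq:stabConditionPD}) and (\ref{eq:stabConditionNS}). Together with the equivalence established in the previous step, this yields the lemma.

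The argument is essentially a bookkeeping reduction to a textbook fact, so no step presents a serious obstacle; the only place demanding mild care is the first, where one must observe that admissibility alone allows $x^{\cS}_i = 0$, and that it is precisely the additional hypothesis $\det(P_{\cS^{(i)}}) \ne 0$ (combined with $\mu \ne 0$ and $\det(I - M_{\cS}) \ne 0$) that rules this out and lets one treat the dynamics near the cycle as smooth.
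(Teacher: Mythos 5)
Your proposal is correct and follows essentially the same route as the paper, which itself only sketches the argument in the paragraph preceding the lemma: the nondegeneracy hypotheses keep the cycle off the switching manifold, so the $n^{\rm th}$ iterate is locally the affine map $f^{\cS^{(i)}}$, and the three inequalities are the standard stability-triangle (Jury) conditions for the eigenvalues of $M_{\cS}$ to lie in the open unit disk. Your added care in checking $x^{\cS}_i \ne 0$ and shrinking the neighborhoods is exactly the bookkeeping the paper leaves implicit.
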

If there is equality in at least one of (\ref{eq:stabConditionSN})-(\ref{eq:stabConditionNS})
but the conditions are satisfied otherwise, then the $\cS$-cycle is stable but not asymptotically stable. 
Equality in (\ref{eq:stabConditionSN})-(\ref{eq:stabConditionNS}) corresponds to, in order,
an eigenvalue $1$,
an eigenvalue $-1$,
and a complex pair of eigenvalues on the unit circle when $|{\rm trace}(M_S)| < 2$.
Note that $\det \left( I - M_{\cS} \right) \equiv \det(M_{\cS}) - {\rm trace}(M_{\cS}) + 1$,
hence the assumption $\det \left( I - M_{\cS} \right) \ne 0$ eliminates the possibility of equality in (\ref{eq:stabConditionSN}).

\section{Necessary conditions for infinite coexistence} 
\label{sec:necessary}

In this section we consider sequences of symbol sequences of the form $\cS[k] = \cX^k \cY$
and obtain conditions on the parameter values of (\ref{eq:f}) that are necessary in order
for infinitely many $\cS[k]$-cycles to be admissible and stable.
The main result is Theorem \ref{th:codim3}.
First some additional notation is introduced.

Suppose that for a given periodic symbol sequence $\cX$,
the matrix $M_{\cX}$ has distinct real eigenvalues $\lambda_1$ and $\lambda_2$, neither of which are equal to $1$.
In this case $\det \left( I - M_{\cX} \right) \ne 0$,
so by Lemma \ref{le:existence} the $\cX$-cycle is unique.
We write the $\cX$-cycle as $\left( x^{\cX}_i,y^{\cX}_i \right)$, for $i=0,\ldots,n_{\cX}-1$,
where $n_{\cX}$ denotes the minimal period of $\cX$.
From (\ref{eq:existence}), with $i=0$
\begin{equation}
\left[ \begin{array}{c} x^{\cX}_0 \\ y^{\cX}_0 \end{array} \right] =
\left( I - M_{\cX} \right)^{-1} P_{\cX} \left[ \begin{array}{c} 1 \\ 0 \end{array} \right] \mu \;.
\label{eq:zStarX}
\end{equation}
Let $\zeta_1$ and $\zeta_2$ be eigenvectors of $M_{\cX}$, corresponding to $\lambda_1$ and $\lambda_2$ respectively,
and let $Q = \big[ \zeta_1 \; \zeta_2 \big]$.
We then consider the change of coordinates
\begin{equation}
\left[ \begin{array}{c} u \\ v \end{array} \right] =
Q^{-1} \left( \left[ \begin{array}{c} x \\ y \end{array} \right] -
\left[ \begin{array}{c} x^{\cX}_0 \\ y^{\cX}_0 \end{array} \right] \right) \;,
\label{eq:uv}
\end{equation}
and, for any $\cS$, let $g^{\cS}$ denote $f^{\cS}$ in $(u,v)$-coordinates.
The coordinates (\ref{eq:uv}) are defined such that $g^{\cX}$ is linear and completely decoupled, specifically
\begin{equation}
g^{\cX}(w) = \left[ \begin{array}{cc} \lambda_1 & 0 \\ 0 & \lambda_2 \end{array} \right] w \;,
\label{eq:gX}
\end{equation}
where we let $w = (u,v)$.

\begin{theorem}
Let $\cS[k] = \cX^k \cY$, where $\cX$ is primitive and $\cX_0 \ne \cY_0$.
Let $\tau_{\sL}, \delta_{\sL}, \tau_{\sR}, \delta_{\sR} \in \mathbb{R}$ and $\mu \ne 0$.
Suppose there exist infinitely many values of $k \ge 1$ for which the map (\ref{eq:f})
exhibits a unique, admissible, stable $\cS[k]$-cycle that has no points on the switching manifold.
Suppose $\lambda_1$ and $\lambda_2$ are eigenvalues of $M_{\cX}$, where $0 \le \lambda_1 < 1 < \lambda_2$,
and that the $v$-axis (as defined by (\ref{eq:uv})) is not parallel to the switching manifold ($x=0$).
Then,
\begin{enumerate}[label=\roman{*}),ref=\roman{*}]
\item
\label{it:gY}
$g^{\cY}$ maps the $v$-axis to the $u$-axis;
\item
\label{it:lambda2}
$\lambda_1 \ne 0$ and $\lambda_2 = \frac{1}{\lambda_1}$;
\item
\label{it:HCorbit}
the $\cX$-cycle is admissible
and $\cS[k]$-cycles limit to an orbit that is homoclinic to the $\cX$-cycle as $k \to \infty$.
\end{enumerate}
\label{th:codim3}
\end{theorem}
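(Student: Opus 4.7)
My plan is to work throughout in the $(u,v)$-coordinates of (\ref{eq:uv}), in which $g^{\cX}(w) = \Lambda w$ with $\Lambda = {\rm diag}(\lambda_1,\lambda_2)$, and to write the affine map $g^{\cY}$ as $g^{\cY}(w) = B w + c$ with $B = (b_{ij})$ and $c = (c_1,c_2)$.  Since $g^{\cS[k]} = g^{\cY} \circ g^{\cX^k}$, the full return map is $w \mapsto B \Lambda^k w + c$.  The starting point of the $\cS[k]$-cycle, call it $w_0^{[k]}$, solves $(I - B\Lambda^k) w_0^{[k]} = c$, and the point at the transition from the $\cX^k$-block to the $\cY$-block is $w_*^{[k]} := \Lambda^k w_0^{[k]}$.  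Since $B\Lambda^k$ is conjugate to $M_{\cS[k]}$, the stability multipliers of the $\cS[k]$-cycle are exactly the eigenvalues of $B\Lambda^k$.

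The first step is to prove $b_{22} = 0$ via Lemma \ref{le:stability}.  One has ${\rm trace}(B\Lambda^k) = b_{11}\lambda_1^k + b_{22}\lambda_2^k$ and $\det(B\Lambda^k) = \det(B)(\lambda_1\lambda_2)^k$.  Because $0 \le \lambda_1 < 1 < \lambda_2$, the ratio $(\lambda_1\lambda_2)^k / \lambda_2^k = \lambda_1^k \to 0$.  So if $b_{22} \ne 0$, the trace grows like $b_{22}\lambda_2^k$ while the determinant is asymptotically negligible in comparison; depending on the sign of $b_{22}$, one of the inequalities (\ref{eq:stabConditionPD}) or (\ref{eq:stabConditionSN}) must then fail for all sufficiently large $k$, contradicting the stability assumption.

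The heart of the argument is the second step, using admissibility together with the no-switching-manifold hypothesis and $\cX_0 \ne \cY_0$.  The assumption that the $v$-axis is not parallel to $x = 0$ translates, after writing the inverse of (\ref{eq:uv}) as $x = x_0^{\cX} + q_{11} u + q_{12} v$, to $q_{12} \ne 0$.  The points $w_{\ell n_{\cX}}^{[k]} = \Lambda^\ell w_0^{[k]}$ for $\ell = 0,1,\ldots,k-1$ all have strictly the sign of $\cX_0$ in $x$, while $w_*^{[k]}$ (at $\ell = k$) has strictly the opposite sign.  Since $\lambda_1^\ell u_0^{[k]} \to 0$ when $\ell$ is close to $k$, the $x$-sign flip between $\ell = k-1$ and $\ell = k$ must arise from the term $q_{12}\lambda_2^\ell v_0^{[k]}$, and combining the admissibility inequality at $\ell = k-1$ with that at $\ell = k$ yields matching upper and lower bounds on $q_{12}\lambda_2^k v_0^{[k]}$; hence $\lambda_2^k v_0^{[k]}$ is uniformly bounded and uniformly bounded away from zero.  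Applying Cramer's rule to $(I - B\Lambda^k)^{-1} c$ with $b_{22} = 0$ yields an explicit rational expression whose numerator for $v_0^{[k]}$ is $c_2 + (b_{21}c_1 - b_{11}c_2)\lambda_1^k$ and whose denominator is $1 - b_{11}\lambda_1^k - b_{12}b_{21}(\lambda_1\lambda_2)^k$.  Boundedness of $\lambda_2^k v_0^{[k]}$ then forces $c_2 = 0$ (otherwise the dominant numerator term $c_2\lambda_2^k$ produces divergence for every value of $\lambda_1\lambda_2$), while nonvanishing of its limit forces $\lambda_1\lambda_2 \ge 1$.  Combined with the stability condition (\ref{eq:stabConditionNS}), which reads $|b_{12}b_{21}|(\lambda_1\lambda_2)^k < 1$ once $b_{22} = 0$, this gives $\lambda_1\lambda_2 \le 1$, hence $\lambda_1\lambda_2 = 1$.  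This proves (ii), and together with $b_{22} = c_2 = 0$ it gives (i) because $g^{\cY}(\{u=0\}) = \{(b_{12}v + c_1,\,0) : v \in \mathbb{R}\}$ is then contained in the $u$-axis.

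For (iii), admissibility of the $\cX$-cycle follows because for any fixed $j$ and $\ell$ in the middle of $[0,k]$ both $\lambda_1^\ell u_0^{[k]}$ and $\lambda_2^\ell v_0^{[k]}$ tend to zero, so $w_{\ell n_{\cX} + j}^{[k]}$ approaches $w_j^{\cX}$ and inherits the $x$-sign dictated by $\cX_j$.  Writing $w_0^{\infty} = \lim_{k\to\infty} w_0^{[k]}$ and $w_*^{\infty} = \lim_{k\to\infty} w_*^{[k]}$, the first lies on the $u$-axis (stable manifold of the $\cX$-cycle) and the second on the $v$-axis (unstable manifold), with $g^{\cY}(w_*^{\infty}) = w_0^{\infty}$; forward iterates of $g^{\cX}$ from $w_0^{\infty}$ and backward iterates from $w_*^{\infty}$ both approach the origin, so the limiting orbit is bi-asymptotic to the $\cX$-cycle, i.e., homoclinic.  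The main obstacle will be the careful asymptotic bookkeeping throughout the second step, and in particular the verification that the conclusion extends to degenerate sub-cases in which one or more of $b_{12}$, $b_{21}$, or $\det(B)$ vanishes and the generic Cramer's rule analysis must be supplemented by direct case checks.
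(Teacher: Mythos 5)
Your overall strategy is the same as the paper's: pass to $(u,v)$-coordinates, use stability of $M_{\cS[k]}$ (conjugate to $B\Lambda^k$) to force $b_{22}=0$, use admissibility at the $\cX_0$-to-$\cY_0$ transition together with the non-parallelism of the $v$-axis to force $c_2=0$ and $\lambda_1\lambda_2=1$, and obtain (iii) by passing to the limit in the explicit fixed-point formula. Steps one and three, and the derivation of (i) from $b_{22}=c_2=0$, match the paper's proof essentially verbatim.

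The gap is in your second step, specifically in the claim that the admissibility inequalities at $\ell=k-1$ and $\ell=k$ yield that $q_{12}\lambda_2^k v_0^{[k]}$ is \emph{uniformly bounded and uniformly bounded away from zero}. Writing $x = x^{\cX}_0 + q_{11}u + q_{12}v$ and $s_k = q_{11}\lambda_1^k u_0^{[k]}$, $t_k = q_{12}\lambda_2^k v_0^{[k]}$, the two sign conditions read (say for $\cX_0=\sR$, $\cY_0=\sL$) $x^{\cX}_0 + s_k/\lambda_1 + t_k/\lambda_2 \ge 0$ and $x^{\cX}_0 + s_k + t_k \le 0$. These give $t_k$ bounded away from zero only if $x^{\cX}_0 \ne 0$, i.e.\ only if the base point of the $\cX$-cycle is off the switching manifold --- which is not a hypothesis and is not yet known at that stage (admissibility of the $\cX$-cycle is part of conclusion (iii), and even there points on the manifold are not excluded). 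Moreover, both bounds are vacuous unless $s_k$ is controlled, and before $c_2=0$ is established the numerator of $u_0^{[k]}$ contains the term $b_{12}c_2\lambda_2^k$, so ``$\lambda_1^\ell u_0^{[k]}\to 0$ for $\ell$ near $k$'' is not yet available; the argument is circular as written. The paper avoids both problems: it first kills $\sigma_2$ using only that the $v$-component of $w^{\cS[k]}_{kn_{\cX}}$ diverges while the $u$-component does not, so any line separating $w^{\cS[k]}_{(k-1)n_{\cX}}$ from $w^{\cS[k]}_{kn_{\cX}}$ for infinitely many $k$ would have to be parallel to the $v$-axis; and it then excludes $\gamma_{21}=0$ and $\lambda_1\lambda_2<1$ (the case $t_k\to 0$) not by a lower bound but by a continuity argument --- in that case $w^{\cS[k]}_{(k-1)n_{\cX}}$ and $w^{\cS[k]}_{kn_{\cX}}$ both converge to the origin, yet their images under $n_{\cX}+n_{\cY}$ iterations converge to the distinct points $(\sigma_1,0)$ and $(\sigma_1\lambda_1,0)$, contradicting continuity of (\ref{eq:f}). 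You should replace the ``bounded away from zero'' claim with an argument of this kind; your own closing caveat about degenerate sub-cases ($b_{12}$, $b_{21}$, $\det(B)$ vanishing) points at the symptom but not at the $x^{\cX}_0=0$ case, which is the one your inequalities genuinely cannot handle.
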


A proof of Theorem \ref{th:codim3} is given after some remarks.
First notice that the scenario depicted in Fig.~\ref{fig:infFa} conforms to the assumptions of Theorem \ref{th:codim3}.
Here $\cX = \sR \sL \sR$, which is primitive,
and $\cY = \sL \sR$, which begins with a different symbol than $\cX$.
The $(u,v)$-coordinates are centered at $\left( x^{\sR \sL \sR},y^{\sR \sL \sR} \right)$ -- the right-most point of the $\sR \sL \sR$-cycle.
Locally, the $u$ and $v$-axes are, respectively, the stable and unstable manifolds of this point,
and it is straight-forward to verify directly that parts (\ref{it:gY})-(\ref{it:HCorbit}) of Theorem \ref{th:codim3} are satisfied for this example.

The form $\cS[k] = \cX^k \cY$, with the assumptions of Theorem \ref{th:codim3}, is highly general.
Given $\cS[k] = \cX^k \cY$ with $\cX$ not primitive,
we may redefine $\cX$ so that it is primitive by introducing a higher power of $k$.
Also if $\cX_0 = \cY_0$, or $\cS[k]$ involves symbols preceding $\cX^k$,
as long as $\cY$ is not a power of $\cX$ we may apply a shift permutation and redefine $\cX$ and $\cY$
so that $\cS[k]$ takes the form $\cX^k \cY$ with $\cX_0 \ne \cY_0$.
Also, the assumption that $\cS[k]$-cycles have no points on the switching manifold
is made in part for simplicity -- so that their stability is determined purely by the eigenvalues of $M_{\cS[k]}$
as opposed to more sophisticated methods \cite{DoKi08} --
and in part because the presence of points on the switching manifold represents an additional degeneracy.

The restrictions on the eigenvalues of $M_{\cX}$ are motivated by observations of
infinitely many stable or asymptotically stable periodic solutions in smooth maps
occurring when there is a homoclinic or heteroclinic connection (which requires the existence of an invariant of saddle-type).
It is not clear if infinite coexistence is possible when $M_{\cX}$ has a negative eigenvalue,
because in this instance both branches of the corresponding invariant manifold are involved.

In $(u,v)$-coordinates, $g^{\cY}$ is an affine map;
let us write it as
\begin{equation}
g^{\cY}(w) = \left[ \begin{array}{cc} \gamma_{11} & \gamma_{12} \\ \gamma_{21} & \gamma_{22} \end{array} \right] w +
\left[ \begin{array}{c} \sigma_1 \\ \sigma_2 \end{array} \right] \;,
\label{eq:gY}
\end{equation}
for some constants $\gamma_{ij}$, $\sigma_1$ and $\sigma_2$.
Then part (\ref{it:gY}) of Theorem \ref{th:codim3} is equivalent to the statement, $\gamma_{22} = \sigma_2 = 0$.
Assuming that the three requirements $\lambda_2 = \frac{1}{\lambda_1}$ (part (\ref{it:lambda2}) of the theorem),
$\gamma_{22} = 0$ and $\sigma_2 = 0$ involve no degeneracy,
it follows that the assumptions of Theorem \ref{th:codim3} describe a scenario that is at least codimension-three\removableFootnote{
Careful!
Note, for the three examples below it is evident that the three requirements involve no degeneracy.
On the other hand note that the theorem doesn't tell us if whether or not there may be more requirements.
Given that the three requirements do not involve a degeneracy (except in a higher codimension scenario!)
the three requirements describe a codimension-three scenario
and the theorem tells us that the assumptions of the theorem describe a scenario that is at least codimension-three.
}.

The demonstration of $\sigma_2 = 0$ in the proof below
uses the assumption that the $v$-axis is not parallel to the switching manifold
(equivalently, $\left[ 0,1 \right]^{\sf T}$ is not an eigenvector of $M_{\cX}$ for the eigenvalue $\lambda_2$).
As evident in the following proof, with $\sigma_2 \ne 0$,
$\cS[k]$-cycles grow in size with $k$ without bound and are virtual for large $k$
if the $v$-axis is not parallel to the switching manifold.
It remains to determine if infinite coexistence is achievable for (\ref{eq:f})
in the case that $\sigma_2 \ne 0$ and the $v$-axis is parallel to the switching manifold\removableFootnote{
I did a brief search for parameter values without success.
}.

\begin{proof}[Proof of Theorem \ref{th:codim3}]
Here parts (\ref{it:gY})-(\ref{it:HCorbit}) of Theorem \ref{th:codim3} are demonstrated sequentially.
\begin{enumerate}[label=\roman{*}),ref=\roman{*}]
\item
By composing $k$ instances of (\ref{eq:gX}) with (\ref{eq:gY}),
in $(u,v)$-coordinates the image of a point $w$ under $\cS[k]$ is
\begin{equation}
g^{\cS[k]}(w) =
\left[ \begin{array}{cc}
\gamma_{11} \lambda_1^k & \gamma_{12} \lambda_2^k \\
\gamma_{21} \lambda_1^k & \gamma_{22} \lambda_2^k
\end{array} \right] w +
\left[ \begin{array}{c} \sigma_1 \\ \sigma_2 \end{array} \right] \;.
\label{eq:gSk}
\end{equation}
The matrix part of (\ref{eq:gSk}) has the same spectrum as $M_{\cS[k]}$
(the matrix part of $f^{\cS[k]}$), therefore
\begin{equation}
{\rm trace} \left( M_{\cS[k]} \right) = \gamma_{11} \lambda_1^k + \gamma_{22} \lambda_2^k \;.
\nonumber
\end{equation}
By Lemma \ref{le:stability}, the assumption that $\cS[k]$-cycles are stable for large $k$ implies
${\rm trace} \left( M_{\cS[k]} \right) \not\to \infty$ as $k \to \infty$.
Therefore we must have $\gamma_{22} = 0$, since $\lambda_2 > 1$.

With $\gamma_{22} = 0$, ${\rm trace} \left( M_{\cS[k]} \right) \to 0$, as $k \to \infty$, and\removableFootnote{
For stability we require $\det \left( M_{\cS[k]} \right) \not\to \infty$,
but this does not (quite) imply $\lambda_1 \lambda_2 \le 1$ because may it be possible to have $\gamma_{12} \gamma_{21} = 0$.
Note that the only way by which we may have $\lambda_1 \lambda_2 > 1$
and $\gamma_{12} \gamma_{21} = 0$ is if $\cX$ consists of a single symbol.
}
\begin{equation}
\det \left( M_{\cS[k]} \right) = -\gamma_{12} \gamma_{21} \lambda_1^k \lambda_2^k \;.
\nonumber
\end{equation}
We now show that $\det \left( I - M_{\cS[k]} \right)$
is bounded away from zero for large $k$.
Since $\det \left( I - M_{\cS[k]} \right) = \det \left( M_{\cS[k]} \right) - {\rm trace} \left( M_{\cS[k]} \right) + 1$,
this is certainly true if $\det \left( M_{\cS[k]} \right) \to 0$.
If $\det \left( M_{\cS[k]} \right) \not\to 0$,
then, due to the assumption that $\cS[k]$-cycles are stable,
we must have $\lambda_1 \ne 0$, $\lambda_2 = \frac{1}{\lambda_1}$,
and consequently $\det \left( M_{\cS[k]} \right) = -\gamma_{12} \gamma_{21}$ for all $k$.
Furthermore, in this case we cannot have $\det \left( M_{\cS[k]} \right) = -1$ because 
$\cS[k]$-cycles are assumed to be unique and stable for large $k$.
Therefore, in either case, $\det \left( I - M_{\cS[k]} \right)$ is bounded away from zero for large $k$.

In $(u,v)$-coordinates, we denote points of the $\cS[k]$-cycle by
$w^{\cS[k]}_i = \left( u^{\cS[k]}_i,v^{\cS[k]}_i \right)$, for $i = 0,\ldots,k n_{\cX} + n_{\cY} - 1$,
where $n_{\cX}$ and $n_{\cY}$ are the lengths of the words $\cX$ and $\cY$, respectively.
The point, $w^{\cS[k]}_0$, is the unique fixed point of (\ref{eq:gSk}),
and for each $j = 1,\ldots,k$, $w^{\cS[k]}_{j n_{\cX}} = g^{\cX} \left( w^{\cS[k]}_{(j-1) n_{\cX}} \right)$.
From these equations, and substituting $\gamma_{22} = 0$, we obtain
\begin{equation}
w^{\cS[k]}_{j n_{\cX}} = \frac{1}{1 - \gamma_{11} \lambda_1^k - \gamma_{12} \gamma_{21} \lambda_1^k \lambda_2^k}
\left[ \begin{array}{c}
\sigma_1 \lambda_1^j + \gamma_{12} \sigma_2 \lambda_1^j \lambda_2^k \\
\left( \gamma_{21} \sigma_1 - \gamma_{11} \sigma_2 \right) \lambda_1^k \lambda_2^j + \sigma_2 \lambda_2^j
\end{array} \right] \;,
\label{eq:wSkj}
\end{equation}
valid for $j = 0,\ldots,k$.

The symbols $\cS[k]_{(k-1) n_{\cX}} = \cX_0$ and $\cS[k]_{k n_{\cX}} = \cY_0$ are different, by assumption.
Hence, for each $k$ for which the $\cS[k]$-cycle is admissible,
each point $w^{\cS[k]}_{(k-1) n_{\cX}}$ lies on one side of the switching manifold (or on the switching manifold)
and each point $w^{\cS[k]}_{k n_{\cX}}$ lies on the other side of the switching manifold (or on the switching manifold).
We now show that this observation implies
$\sigma_2 = 0$, $\sigma_1 \ne 0$, $\gamma_{21} \ne 0$, $\lambda_1 \ne 0$ and $\lambda_2 = \frac{1}{\lambda_1}$.

By (\ref{eq:wSkj}), if $\sigma_2 \ne 0$, then, as $k \to \infty$,
the $u$ component of $w^{\cS[k]}_{k n_{\cX}}$ converges whereas the $v$ component diverges.
Consequently, any line that divides the points $w^{\cS[k]}_{(k-1) n_{\cX}}$ and $w^{\cS[k]}_{k n_{\cX}}$
for infinitely many values of $k$ must be parallel to the $v$-axis.
The $v$-axis is assumed to be not parallel to the switching manifold, thus this scenario is not permitted.
Therefore $\sigma_2 = 0$ and (\ref{eq:gY}) is given by
\begin{equation}
g^{\cY}(w) = \left[ \begin{array}{cc} \gamma_{11} & \gamma_{12} \\ \gamma_{21} & 0 \end{array} \right] w +
\left[ \begin{array}{c} \sigma_1 \\ 0 \end{array} \right] \;,
\label{eq:gY2}
\end{equation}
which verifies part (\ref{it:gY}).

\item
With $\sigma_2 = 0$, (\ref{eq:wSkj}) reduces to
\begin{equation}
w^{\cS[k]}_{j n_{\cX}} = \frac{\sigma_1}{1 - \gamma_{11} \lambda_1^k - \gamma_{12} \gamma_{21} \lambda_1^k \lambda_2^k}
\left[ \begin{array}{c}
\lambda_1^j \\
\gamma_{21} \lambda_1^k \lambda_2^j
\end{array} \right] \;.
\label{eq:wSkj2}
\end{equation}
Hence $\sigma_1 \ne 0$, since the points $w^{\cS[k]}_{j n_{\cX}}$ must be distinct.
Therefore, as $k \to \infty$, $u^{\cS[k]}_{k n_{\cX}}$ (the $u$-component of (\ref{eq:wSkj2}) with $j = k$) tends to zero.
We cannot have $\gamma_{21} \ne 0$ and $\lambda_1 \lambda_2 > 1$,
because then $\left| v^{\cS[k]}_{k n_{\cX}} \right| \to \infty$ as $k \to \infty$,
which is immediately seen to be not possible in view of the assumption that
the $v$-axis is not parallel to the switching manifold\removableFootnote{
Unlike above, here $u^{\cS[k]}_{k n_{\cX}} \to 0$,
so even with the $v$-axis parallel to the switching manifold
a line cannot be fit between infinitely many $w^{\cS[k]}_{(k-1) n_{\cX}}$ and $w^{\cS[k]}_{k n_{\cX}}$,
although my proof of this is rather long.
}.

If $\gamma_{21} = 0$ or $\lambda_1 \lambda_2 < 1$, then $v^{\cS[k]}_{k n_{\cX}} \to 0$,
which we now show is also not possible.
When the $\cS[k]$-cycle is admissible, under $n_{\cX} + n_{\cY}$ iterations of (\ref{eq:f})
$w^{\cS[k]}_{(k-1) n_{\cX}}$ maps to $w^{\cS[k]}_0$ (following $\cX \cY$)
and $w^{\cS[k]}_{k n_{\cX}}$ maps to $w^{\cS[k]}_{n_{\cX}}$ (following $\cY \cX$).
In this scenario, the distance between the points $w^{\cS[k]}_{(k-1) n_{\cX}}$ and $w^{\cS[k]}_{k n_{\cX}}$ tends to zero as $k \to \infty$,
therefore, since (\ref{eq:f}) is a continuous map,
the distance between the $(n_{\cX} + n_{\cY})^{\rm th}$ iterates of these two points must also tend to zero.
However, from (\ref{eq:wSkj2}) we see that the distance between $w^{\cS[k]}_0$ and $w^{\cS[k]}_{n_{\cX}}$
does not tend to zero, which is contradiction.
Therefore $\gamma_{21} \ne 0$, $\lambda_1 \ne 0$ and $\lambda_2 = \frac{1}{\lambda_1}$.

\item
We now show that the $\cX$-cycle is admissible.
When the $\cS[k]$-cycle is admissible, each $w^{\cS[k]}_{j n_{\cX}}$ for $j = 0,\ldots,k-1$
follows $\cX$ for the next $n_{\cX}$ iterations under (\ref{eq:f}).
By (\ref{eq:wSkj2}), with $j \approx \frac{k}{2}$, $w^{\cS[k]}_{j n_{\cX}} \to (0,0) = w^{\cX}_0$ as $k \to \infty$.
Since (\ref{eq:f}) is continuous and $\cS[k]$-cycles are admissible for infinitely many values of $k$,
the forward orbit of $w^{\cX}_0$ also follows $\cX$ under (\ref{eq:f}), thus the $\cX$-cycle is admissible\removableFootnote{
Conceivably one or more points of the $\cX$-cycle (but not $w^{\cX}_0$) could lie on the switching manifold,
although I have not seen an example of this.
}.

Finally, by (\ref{eq:wSkj2}), as $k \to \infty$, $w^{\cS[k]}_{k n_{\cX}}$
and $w^{\cS[k]}_0$ limit to points on the $v$-axis and $u$-axis respectively.
By the continuity of (\ref{eq:f}), the first limit point belongs to the unstable manifold of $w^{\cX}_0$.
This limit point maps to the second limit point under $n_{\cY}$ iterations of (\ref{eq:f}) (following $\cY$)
and the second limit point belongs to the stable manifold of $w^{\cX}_0$.
Hence these points belong to a homoclinic orbit of the $\cX$-cycle
and $\cS[k]$-cycles limit to this orbit as $k \to \infty$. 
\end{enumerate}
\end{proof}

\section{Further consequences of infinite coexistence}
\label{sec:further}

Part (\ref{it:HCorbit}) of Theorem \ref{th:codim3} tells us that $\cS[k]$-cycles
(comprised of points that in $(u,v)$-coordinates (\ref{eq:uv}) are denoted $w^{\cS[k]}_i$, for $i = 0,\ldots,k n_{\cX} + n_{\cY} - 1$)
limit to an orbit that is homoclinic to the $\cX$-cycle as $k \to \infty$\removableFootnote{
Generically we expect the homoclinic orbit of part (\ref{it:HCorbit}) of Theorem \ref{th:codim3} to have no points on the switching manifold.
In this case, since $\cS[k]$-cycles converge to this orbit
it is immediately evident that not only are there infinitely many values of $k$ for which the $\cS[k]$-cycle
is unique, admissible, and asymptotically stable,
but that this is true for all $k \ge \kappa$, for some $\kappa \in \mathbb{Z}$.
I have omitted this from the statement of Theorem \ref{th:codim3} because it is not terribly important, and as to not crowd the theorem.
If the homoclinic orbit has one or more points on the switching manifold I suspect that this statement still holds
but that a demonstration of the statement is no longer trivial.
Note, the assumption that the homoclinic orbit has no points on the switching manifold
implies $\varphi_0$ and $\psi_0$ lie in the interior of $\Phi_0$.
}.
Therefore the stable and unstable manifolds of the $\cX$-cycle intersect.
In this section it is shown that with additional assumptions that do not
add to the codimension\removableFootnote{
Intuitively this is clear.
Really I can only state this based on the examples in \S\ref{sec:finding} and the proof in \S\ref{sec:sufficient}.
}
of the scenario described by Theorem \ref{th:codim3},
the branches of the stable and unstable manifolds that intersect are coincident.

Before we begin, it is helpful to introduce abbreviated labels to four points of the homoclinic orbit:
\begin{equation}
\begin{split}
a &= \lim_{k \to \infty} w^{\cS[k]}_{(k-1) n_{\cX}} =
\left( 0 \;, \frac{\gamma_{21} \sigma_1 \lambda_1}{1 - \gamma_{12} \gamma_{21}} \right) \;, \qquad
b = \lim_{k \to \infty} w^{\cS[k]}_{k n_{\cX}} =
\left( 0 \;, \frac{\gamma_{21} \sigma_1}{1 - \gamma_{12} \gamma_{21}} \right) \;, \\
c &= \lim_{k \to \infty} w^{\cS[k]}_0 =
\left( \frac{\sigma_1}{1 - \gamma_{12} \gamma_{21}} \;, 0 \right) \;, \qquad
d = \lim_{k \to \infty} w^{\cS[k]}_{n_{\cX}} =
\left( \frac{\sigma_1 \lambda_1}{1 - \gamma_{12} \gamma_{21}} \;, 0 \right) \;,
\end{split}
\label{eq:abcd}
\end{equation}
where the formulas in (\ref{eq:abcd}) are obtained from (\ref{eq:wSkj2}).
Under (\ref{eq:f}), $a$ maps to $b$ following $\cX$,
$b$ maps to $c$ following $\cY$,
and $c$ maps to $d$ following $\cX$.
These points are shown in a schematic of $(u,v)$-coordinates, Fig.~\ref{fig:uvCoords}.
Let $\Phi_0$ denote the closed line segment connecting $a$ and $b$,
and let $\Phi_i$ denote the image of $\Phi_{i-1}$ under (\ref{eq:f}), for each $i = 1,\ldots,n_{\cX}+n_{\cY}$.
Also let,
\begin{equation}
\Xi = \bigcup_{i=0}^{n_{\cX}+n_{\cY}-1} \Phi_i \;.
\nonumber
\end{equation}

\begin{figure}[b!]
\begin{center}
\setlength{\unitlength}{1cm}
\begin{picture}(15,7.5)
\put(0,0){\includegraphics[height=7.5cm]{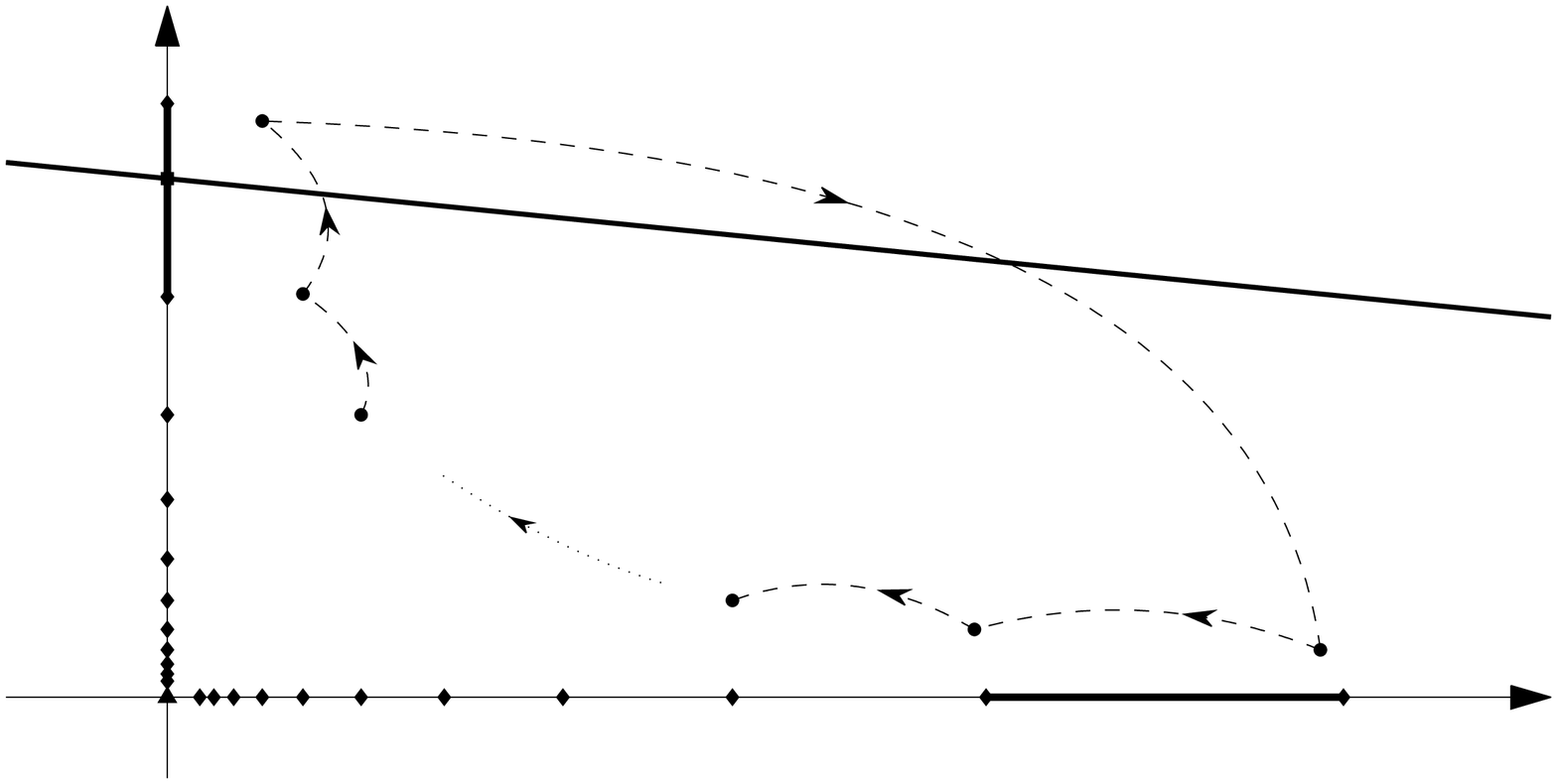}}
\put(13,4.8){\footnotesize $x=0$}
\put(1.1,5.1){\footnotesize $\Phi_0$}
\put(10.9,.46){\footnotesize $\Phi_{n_{\cX}+n_{\cY}}$}
\put(1.63,5.54){\footnotesize $\varphi_0$}
\put(1.67,4.6){\footnotesize $a$}
\put(1.67,6.45){\footnotesize $b$}
\put(12.8,.94){\footnotesize $c$}
\put(9.5,.94){\footnotesize $d$}
\put(3,4.65){\footnotesize $w^{\cS[k]}_{(k-1) n_{\cX}}$}
\put(2.5,6.55){\footnotesize $w^{\cS[k]}_{k n_{\cX}}$}
\put(12.8,1.3){\footnotesize $w^{\cS[k]}_0$}
\put(9.2,1.66){\footnotesize $w^{\cS[k]}_{n_{\cX}}$}
\put(3.6,3.9){\footnotesize $\cX$}
\put(3.25,5.3){\footnotesize $\cX$}
\put(7.2,5.95){\footnotesize $\cY$}
\put(11.1,1.7){\footnotesize $\cX$}
\put(8.2,1.96){\footnotesize $\cX$}
\put(1,.4){\footnotesize $w^{\cX}_0$}
\put(14.24,.9){$u$}
\put(1.73,7.2){$v$}
\end{picture}
\caption{
Dynamics in $(u,v)$-coordinates (\ref{eq:uv}).
The origin is the point $w^{\cX}_0$ (one point of the saddle-type $\cX$-cycle).
Locally, the stable and unstable manifolds of the origin are linear and coincide with the $u$ and $v$-axes respectively.
The small circles represent points of an $\cS[k]$-cycle, where $\cS[k] = \cX^k \cY$.
For each $j = 0,\ldots,k-1$, $w^{\cS[k]}_{j n_{\cX}}$ maps to $w^{\cS[k]}_{(j+1) n_{\cX}}$ under (\ref{eq:f}) following $\cX$ (\ref{eq:gX}).
Also $w^{\cS[k]}_{k n_{\cX}}$ maps to $w^{\cS[k]}_0$ following $\cY$ (\ref{eq:gY2}).
As $k \to \infty$, the $\cS[k]$-cycle limits to the homoclinic orbit represented by small diamonds.
The four points, $a$, $b$, $c$ and $d$, of this orbit are given by (\ref{eq:abcd}).
\label{fig:uvCoords}
}
\end{center}
\end{figure}

Since $a$ maps to $c$ under $g^{\cX \cY}$, $b$ maps to $d$ under $g^{\cY \cX}$, and the homoclinic orbit is admissible,
it follows that $\Phi_i$ intersects the switching manifold whenever $(\cX \cY)_i \ne (\cY \cX)_i$.
Since $\cX_0 \ne \cY_0$, as assumed in Theorem \ref{th:codim3},
then $(\cX \cY)_i \ne (\cY \cX)_i$ for $i=0$ and for at least one other value of $i$.
Therefore $\Xi$ must have at least two intersections with the switching manifold.
The following theorem concerns the simplest scenario: that $\Xi$ has exactly two such intersections.

\begin{theorem}
Suppose (\ref{eq:f}) is invertible and satisfies the conditions of Theorem \ref{th:codim3}.
Suppose $\Xi$ intersects the switching manifold at only two points.
Then $\Phi_{n_{\cX} + n_{\cY}}$ is the line segment connecting $c$ and $d$,
and the branches of the stable and unstable manifolds of the $\cX$-cycle that involve the homoclinic orbit of part (\ref{it:HCorbit}) of Theorem \ref{th:codim3} are coincident.
\label{th:coincident}
\end{theorem}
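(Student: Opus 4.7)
The plan is to first use the hypothesis that $\Xi$ meets the switching manifold at only two points to pin down the itineraries of points in $\Phi_0$, then show that the three resulting pieces of $\Phi_0$ all map into the $u$-axis under $f^{n_{\cX}+n_{\cY}}$, and finally invoke invariance of the stable and unstable manifolds to upgrade this to coincidence of the branches.

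For the partition step, I would first note that $\cX\cY$ and $\cY\cX$ contain the same multiset of symbols, so their Hamming distance is even, while Theorem \ref{th:codim3} ensures it is at least $2$. Whenever $(\cX\cY)_i \ne (\cY\cX)_i$ the endpoints $f^i(a)$ and $f^i(b)$ of $\Phi_i$ sit on opposite sides of the switching manifold, forcing an odd number of crossings of $\Phi_i$ with $\{x=0\}$. Summing over $i$ against the hypothesis $|\Xi \cap \{x=0\}|=2$ then pins the Hamming distance down to exactly $2$, with differences at $i=0$ and some $i^* \in (0, n_{\cX}+n_{\cY})$, and forces $\Phi_0$ and $\Phi_{i^*}$ to contribute exactly one crossing each. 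Pulling the crossing of $\Phi_{i^*}$ back to $\Phi_0$ by the inverse of $f^{i^*}$ identifies two points $\varphi_0, \psi_0 \in \Phi_0$ that meet the switching manifold during the first $n_{\cX}+n_{\cY}$ iterations, and they partition $\Phi_0$ into three subsegments on each of which the itinerary --- and hence $f^{n_{\cX}+n_{\cY}}$ --- is a single affine map.

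Next, a direct calculation from (\ref{eq:gX}) and (\ref{eq:gY2}) shows that both $g^{\cX\cY}$ and $g^{\cY\cX}$ send the $v$-axis into the $u$-axis, handling the two outer subsegments (containing $a$ and $b$) immediately. The middle subsegment carries a mixed itinerary $W$ that agrees with $\alpha = \cX\cY$ except at step $i^*$ and with $\beta = \cY\cX$ except at step $0$. The decisive observation is that the $i^*$-th iterate of $\psi_0$ lies on the switching manifold, so continuity of $f$ forces $g^{W_{i^*}}$ to agree with $g^{\alpha_{i^*}}$ at that particular point; since $W$ and $\alpha$ already agree in every other position, this gives $g^W(\psi_0) = g^{\alpha}(\psi_0)$, which lies on the $u$-axis. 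Symmetrically, $\varphi_0 \in \{x=0\}$ yields $g^W(\varphi_0) = g^{\beta}(\varphi_0)$ on the $u$-axis. Being affine with both endpoints of its image on the $u$-axis, $g^W$ maps the middle subsegment into the $u$-axis as well. All three image pieces therefore lie in $\{v=0\}$, and since $f$ is invertible, $f^{n_{\cX}+n_{\cY}}$ restricted to $\Phi_0$ is injective, making $\Phi_{n_{\cX}+n_{\cY}}$ an injective continuous image of an interval inside the $u$-axis, hence monotonic, and thus exactly the line segment from $c$ to $d$.

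For the coincidence claim, $\Phi_0 \subset W^u$ of the $\cX$-cycle by construction, so forward $f$-invariance gives $[c,d] = \Phi_{n_{\cX}+n_{\cY}} \subset W^u$ of the cycle. Since $[c,d]$ also lies on the local $W^s$ of $w^{\cX}_0$ along the $u$-axis, applying $f^{-1}$ iteratively --- using that invertibility makes $W^s$ of the cycle backward $f$-invariant --- puts every $\Phi_i$, and in particular $\Phi_0 = [a,b]$, in both $W^u$ and $W^s$ of the cycle. Iterating these common segments forward and backward under the return map $f^{\cX}$ extends the coincidence along the entire branches of $W^u$ and $W^s$ that contain the homoclinic orbit. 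The principal technical obstacle is the mixed-itinerary subsegment; the clean resolution is that the two positions where $W$ disagrees with $\alpha$ and with $\beta$ are precisely the steps at which $\psi_0$ and $\varphi_0$ meet the switching manifold, so continuity of $f$ collapses the three affine maps $g^{\alpha}, g^W, g^{\beta}$ at those special points.
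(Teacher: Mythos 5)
Your argument follows the paper's proof essentially step for step: the two switching-manifold intersections partition $\Phi_0$ into (at most) three affine pieces, the outer pieces land on the $u$-axis because $g^{\cX \cY}$ and $g^{\cY \cX}$ map the $v$-axis there, the middle piece does too because its affine image has both endpoints on the $u$-axis (by continuity of $f$ at the switching manifold), and invertibility then yields that $\Phi_{n_{\cX}+n_{\cY}}$ is exactly the segment from $c$ to $d$ and that the branches coincide. The only point you gloss over is the assertion that $[c,d]$ lies in the stable manifold --- being on the $u$-axis is not by itself enough --- which the paper closes by noting that admissibility of the homoclinic orbit forces $w^{\cX}_0$ and $c$ to follow the same itinerary $\cX$, so the switching manifold cannot cross the $u$-axis between them and the whole segment from $w^{\cX}_0$ to $c$ genuinely contracts to the cycle under iteration.
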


As evident in the following proof, the assumption that $\Xi$ has only two intersections with the switching manifold
is sufficient to ensure that all points on the branch of the stable manifold involving the homoclinic orbit map to the unstable manifold.
The additional assumption of invertibility allows us to conclude that the stable and unstable branches are coincident.
Note that (\ref{eq:f}) is invertible if and only if $\delta_{\sL} \delta_{\sR} > 0$.

\begin{proof}[Proof of Theorem \ref{th:coincident}]
Since $\cX_0 \ne \cY_0$ and $\Xi$ is assumed to have only two intersections with the switching manifold,
we have $(\cX \cY)_i \ne (\cY \cX)_i$ for $i=0$ and exactly one other index in the range $i=0,\ldots,n_{\cX}+n_{\cY}-1$, call it $\alpha$.
Then there exist $\varphi_0, \psi_0 \in \Phi_0$,
such that $\varphi_0$ and $\psi_{\alpha}$ are the two points of intersection of $\Xi$ with the switching manifold
(where $\{ \varphi_i \}$ and $\{ \psi_i \}$ denote the forward orbits of $\varphi_0$ and $\psi_0$ under (\ref{eq:f})).

For ease of explanation suppose that either $\varphi_0$ lies closer to $a$ than $\psi_0$, or $\varphi_0 = \psi_0$.
(Analogous arguments produce the same result in the case that $\varphi_0$ is further from $a$ than $\psi_0$.)
Then under (\ref{eq:f}), the line segment connecting $a$ and $\varphi_0$ 
follows $\cX$ to a line segment on the $v$-axis, then follows $\cY$ to a line segment on the $u$-axis.
Thus $\varphi_{n_{\cX}+n_{\cY}}$ lies on the $u$-axis.
Similarly, under (\ref{eq:f}), the line segment connecting $b$ and $\psi_0$
follows $\cY$ to a line segment on the $u$-axis, then follows $\cX$ to a line segment elsewhere on the $u$-axis.
Thus $\psi_{n_{\cX}+n_{\cY}}$ also lies on the $u$-axis.
Since $\Phi_{n_{\cX} + n_{\cY}}$ is a piecewise-linear connection from $c$ to $d$
with possible kinks only at $\varphi_{n_{\cX} + n_{\cY}}$ and $\psi_{n_{\cX} + n_{\cY}}$, $\Phi_{n_{\cX} + n_{\cY}}$ must lie entirely on the $u$-axis.
Since (\ref{eq:f}) is assumed to be invertible, $\Phi_{n_{\cX} + n_{\cY}}$ must be the line segment connecting $c$ and $d$.

Since the homoclinic orbit is admissible,
under $n_{\cX}$ iterations of (\ref{eq:f}) both $w^{\cX}_0$ and $c$ (which lie on the $u$-axis, see Fig.~\ref{fig:uvCoords}) follow $\cX$.
Therefore the switching manifold ($x=0$) does not intersect the $u$-axis at a point between $w^{\cX}_0$ and $c$.
Therefore $\Phi_{n_{\cX} + n_{\cY}}$ is contained in the stable manifold of the $\cX$-cycle.
Furthermore, since (\ref{eq:f}) is invertible and every orbit in the branch of the unstable manifold involving the homoclinic orbit intersects $\Phi_0$,
the stable and unstable branches must be coincident.
\end{proof}

In the above proof $\varphi_0$ and $\psi_{\alpha}$ denote the intersections of $\Xi$ with the switching manifold
(where $\{ \varphi_i \}$ and $\{ \psi_i \}$ are orbits of (\ref{eq:f}) with $\varphi_0, \psi_0 \in \Phi_0$)
and it was shown that the kinks of $\Phi_{n_{\cX} + n_{\cY}}$
at $\varphi_{n_{\cX} + n_{\cY}}$ and $\psi_{n_{\cX} + n_{\cY}}$ are spurious.
This implies that $\Xi$ exhibits one of the following properties. 
Either $\Phi_0$ and $\Phi_{\alpha}$ both intersect the switching manifold
at the unique angle for which their images do not accumulate a kink, or $\varphi_0 = \psi_0$.
Since the former property corresponds to an additional codimension, for the remainder of this paper we do not consider it further.

If $\varphi_0 = \psi_0$, then the next $n_{\cX}+n_{\cY}$ iterates under (\ref{eq:f})
of any point sufficiently close to $\varphi_0$ (and not necessarily on $\Phi_0$)
follow one of four symbol sequences
depending on which side of the switching manifold the point and its image under $\alpha$ iterations of (\ref{eq:f}) are located.
These are $\cX \cY$, $\cY \cX$, $\cX^{\overline{0}} \cY$ and $\cY^{\overline{0}} \cX$,
where $\cS^{\overline{0}}$ is used to denote the word that differs from $\cS$ in only the $0^{\rm th}$ index.
The following theorem concerns $\cX^k \cY^{\overline{0}}$-cycles,
as these are saddle-type periodic solutions for each of the examples in the following section\removableFootnote{
An analogous result holds for $\cX^{\overline{0}} \cX^{k-1} \cY$-cycles,
but for the examples below such periodic solutions are virtual.
}.

\begin{theorem}
Suppose (\ref{eq:f}) is invertible and satisfies the conditions of Theorem \ref{th:codim3}.
Suppose $\Xi$ intersects the switching manifold at only two points:
$\varphi_0 \in \Phi_0$, and its $\alpha^{\rm th}$ iterate under (\ref{eq:f}), $\varphi_{\alpha}$.
Assume $g^{\cY^{\overline{0}}}$ does not map the $v$-axis to the $u$-axis.
Let $\cS'[k] = \cX^k \cY^{\overline{0}}$.
Then, as $k \to \infty$, $\cS'[k]$-cycles limit to a homoclinic orbit of the $\cX$-cycle
that has two points on the switching manifold: $\varphi_0$ and $\varphi_{\alpha}$.
\label{th:saddles}
\end{theorem}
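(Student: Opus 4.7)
The plan is to mirror the proof of Theorem \ref{th:codim3}, computing the $\cS'[k]$-cycle explicitly in $(u,v)$-coordinates. Write $g^{\cY^{\overline{0}}}(w) = \tilde\Gamma w + \tilde\sigma$ with entries $\tilde\gamma_{ij}$ and $\tilde\sigma_i$. Then $g^{\cS'[k]} = g^{\cY^{\overline{0}}} \circ (g^{\cX})^k$ has matrix part $\tilde\Gamma \, \mathrm{diag}(\lambda_1^k, \lambda_2^k)$; I would solve the fixed-point equation for $w^{\cS'[k]}_0$ and then apply $(g^{\cX})^j$ to obtain $w^{\cS'[k]}_{j n_{\cX}}$ for $j = 0, \ldots, k$, exploiting $\lambda_1 \lambda_2 = 1$ from part (\ref{it:lambda2}) of Theorem \ref{th:codim3} to simplify.

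The key observation is the following. Because $\varphi_0$ lies on the switching manifold and (\ref{eq:f}) is continuous, $g^{\cY^{\overline{0}}}(\varphi_0) = g^{\cY}(\varphi_0)$. By part (\ref{it:gY}) of Theorem \ref{th:codim3}, $g^{\cY}$ maps the $v$-axis into the $u$-axis, and $\varphi_0$ lies on the $v$-axis, so $g^{\cY^{\overline{0}}}(\varphi_0)$ must lie on the $u$-axis. Writing $\varphi_0 = (0, v_\varphi)$ (with $v_\varphi \ne 0$, since $w^{\cX}_0 \ne \varphi_0$), this yields $\tilde\gamma_{22} v_\varphi + \tilde\sigma_2 = 0$. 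The hypothesis that $g^{\cY^{\overline{0}}}$ does not map the $v$-axis to the $u$-axis rules out $\tilde\gamma_{22} = \tilde\sigma_2 = 0$, and so forces $\tilde\gamma_{22} \ne 0$.

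Taking $k \to \infty$ and using the leading behavior $\det(I - M_{\cS'[k]}) \sim -\tilde\gamma_{22}\lambda_2^k$, one finds $\lim w^{\cS'[k]}_{k n_{\cX}} = (0, -\tilde\sigma_2/\tilde\gamma_{22})$, which by the relation above equals $\varphi_0$. Analogous limits place the remaining points of a bi-infinite limit orbit: for fixed $j$ the iterates accumulate on the $u$-axis as a stable tail, for fixed $k-j$ on the $v$-axis as an unstable tail, and middle indices converge to $w^{\cX}_0$. By continuity of (\ref{eq:f}), and as in part (\ref{it:HCorbit}) of Theorem \ref{th:codim3}, the limit is a genuine orbit homoclinic to the $\cX$-cycle, and it passes through $\varphi_0$. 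Invertibility of (\ref{eq:f}) then pins down this limit orbit as the full bi-infinite orbit of $\varphi_0$ under (\ref{eq:f}), which by the hypothesis on $\Xi$ contains the two claimed switching-manifold points $\varphi_0$ and $\varphi_\alpha = f^\alpha(\varphi_0)$. The main obstacle is securing $\lim w^{\cS'[k]}_{k n_{\cX}} = \varphi_0$; once this is established, the remainder parallels part (\ref{it:HCorbit}) of the proof of Theorem \ref{th:codim3}.
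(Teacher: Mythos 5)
Your proposal is correct and follows essentially the same route as the paper's proof: both use continuity of (\ref{eq:f}) at $\varphi_0$ to deduce that $g^{\cY^{\overline{0}}}(\varphi_0)=g^{\cY}(\varphi_0)$ lies on the $u$-axis (hence $\tilde\sigma_2=-\tilde\gamma_{22}v_\varphi$ with $\tilde\gamma_{22}\ne 0$ by the non-degeneracy hypothesis), then extract the unique fixed point of the composition of $g^{\cY^{\overline{0}}}$ with $(g^{\cX})^k$ and show $w^{\cS'[k]}_{k n_{\cX}}\to(0,-\tilde\sigma_2/\tilde\gamma_{22})=\varphi_0$, whence $w^{\cS'[k]}_{k n_{\cX}+\alpha}\to\varphi_\alpha$. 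Your asymptotics $\det\left(I-M_{\cS'[k]}\right)\sim-\tilde\gamma_{22}\lambda_2^k$ match the paper's $\cO\left(\lambda_1^k\right)$ expansion of the fixed point, so there is no gap.
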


Since $g^{\cY}$ maps the $v$-axis to the $u$ axis (part (\ref{it:gY}) of Theorem \ref{th:codim3}),
it is reasonable to assume that this is not the case for $g^{\cY^{\overline{0}}}$.
Note that Theorem \ref{th:saddles} does not ensure admissibility of $\cS'[k]$-cycles for large $k$.

\begin{proof}[Proof of Theorem \ref{th:saddles}]
In $(u,v)$-coordinates let us write $g^{\cY^{\overline{0}}}$ as\removableFootnote{
By the continuity of (\ref{eq:f}), the slopes of the partition of phase space about $\varphi_0$
and the maps under $\cX \cY$ and $\cY \cX$ completely determine the maps under
$\cX^{\overline{0}} \cY$ and $\cY^{\overline{0}} \cX$.
}
\begin{equation}
g^{\cY^{\overline{0}}}(w) = \left[ \begin{array}{cc}
\xi_{11} & \xi_{12} \\
\xi_{21} & \xi_{22}
\end{array} \right] w +
\left[ \begin{array}{c} \chi_1 \\ \chi_2 \end{array} \right] \;,
\label{eq:gY0}
\end{equation}
for some constants $\xi_{ij}$, $\chi_1$ and $\chi_2$.
We can express $\chi_1$ and $\chi_2$ in terms of other coefficients
by using the requirement that $g^{\cY}$ and $g^{\cY^{\overline{0}}}$ map $\varphi_0$ to the same point
because (\ref{eq:f}) is a continuous map.
Write $\varphi_0 = \left( 0,\hat{v} \right)$, in $(u,v)$-coordinates.
Then by (\ref{eq:gY2}) and (\ref{eq:gY0}) respectively,
\begin{equation}
g^{\cY}(\varphi_0) = \left[ \begin{array}{c} \gamma_{12} \hat{v} + \sigma_1 \\ 0 \end{array} \right] \;, \qquad
g^{\cY^{\overline{0}}}(\varphi_0) = \left[ \begin{array}{c} \xi_{12} \hat{v} + \chi_1 \\ \xi_{22} \hat{v} + \chi_2 \end{array} \right] \;.
\nonumber
\end{equation}
Therefore $\chi_1 = \sigma_1 + (\gamma_{12} - \xi_{12}) \hat{v}$ and $\chi_2 = -\xi_{22} \hat{v}$.
The assumption that $g^{\cY^{\overline{0}}}$ does not map the $v$-axis to the $u$-axis implies $\xi_{22} \ne 0$.

By composing (\ref{eq:gY0}) with $k$ instances of $g^{\cX}$ (\ref{eq:gX}),
and substituting $\lambda_2 = \frac{1}{\lambda_1}$ and the above formulas for $\chi_1$ and $\chi_2$, we obtain
\begin{equation}
g^{\cY^{\overline{0}} \cX^k}(w) =
\left[ \begin{array}{cc} \xi_{11} \lambda_1^k & \xi_{12} \lambda_1^k \\
\frac{\xi_{21}}{\lambda_1^k} & \frac{\xi_{22}}{\lambda_1^k} \end{array} \right] w +
\left[ \begin{array}{c}
\left( \sigma_1 + \left( \gamma_{12} - \xi_{12} \right) \hat{v} \right) \lambda_1^k \\
-\frac{\xi_{22} \hat{v}}{\lambda_1^k}
\end{array} \right] \;.
\label{eq:gY0Xk}
\end{equation}
Since $\xi_{22} \ne 0$, for large $k$ (\ref{eq:gY0Xk}) has the unique fixed point
\begin{equation}
w^{\cS'[k]}_{k n_{\cX}} =
\left[ \begin{array}{c}
\left( \sigma_1 + \gamma_{12} \hat{v} \right) \lambda_1^k + \cO \left( \lambda_1^{2k} \right) \\
\hat{v} + \cO \left( \lambda_1^k \right)
\end{array} \right] \;.
\nonumber
\end{equation}
Therefore $w^{\cS'[k]}_{k n_{\cX}} \to \varphi_0$, as $k \to \infty$, and also $w^{\cS'[k]}_{k n_{\cX} + \alpha} \to \varphi_{\alpha}$.
\end{proof}

\section{Finding infinitely many attracting periodic solutions}
\label{sec:finding}

This section introduces a practical method to finding parameter values
$\tau_{\sL}$, $\delta_{\sL}$, $\tau_{\sR}$ and $\delta_{\sR}$
for which (\ref{eq:f}) has infinitely many attracting periodic solutions.
The method is then applied to produce three examples.

By Theorem \ref{th:codim3}, three requirements necessary for infinite coexistence are
$\lambda_2 = \frac{1}{\lambda_1}$, $\gamma_{22} = 0$ and $\sigma_2 = 0$,
where $\lambda_1$ and $\lambda_2$ are the eigenvalues of $M_{\cX}$, and $\gamma_{22}$ and $\sigma_2$ are coefficients of the map $g^{\cY}$ (\ref{eq:gY}).
In order to identify suitable values of $\tau_{\sL}$, $\delta_{\sL}$, $\tau_{\sR}$ and $\delta_{\sR}$,
we translate these requirements into three restrictions on the parameter values.

We begin with the requirement $\lambda_2 = \frac{1}{\lambda_1}$, which is equivalent to $\det \left( M_{\cX} \right) = 1$.
Let $l_{\cX}$ denote the number of $\sL$'s that are present in the word $\cX$.
Then $M_{\cX}$ is a product of $l_{\cX}$ instances of $A_{\sL}$, and $n_{\cX} - l_{\cX}$ instances of $A_{\sR}$, hence
$\det \left( M_{\cX} \right) = \delta_{\sL}^{l_{\cX}} \delta_{\sR}^{n_{\cX} - l_{\cX}}$.
Therefore $\lambda_2 = \frac{1}{\lambda_1}$ is equivalent to
\begin{equation}
\delta_{\sL} = \delta_{\sR}^{-\frac{n_{\cX} - l_{\cX}}{l_{\cX}}} \;.
\label{eq:deltaL}
\end{equation}

It is impractical to directly impose the remaining two requirements, $\gamma_{22} = 0$ and $\sigma_2 = 0$,
because expressions for $\gamma_{22}$ and $\sigma_2$
in terms of the four parameters are extremely complicated even for simple choices of $\cX$ and $\cY$.
This is because $\gamma_{22}$ and $\sigma_2$ are coefficients in $(u,v)$-coordinates (\ref{eq:uv}),
and consequently expressions for these coefficients involve formulas for the eigenvalues of $M_{\cX}$, which involve a square root.
Algebraic manipulations are made substantially more manageable by instead using the results of \S\ref{sec:further} to
express $\zeta_1$ and $\zeta_2$ (eigenvectors of $M_{\cX}$) in terms of the parameters.
This is explained below.
We then let
\begin{equation}
Q = \big[ \zeta_1 \; \zeta_2 \big] \;, \qquad
\Omega = Q^{-1} M_{\cX} Q \;.
\label{eq:QOmega}
\end{equation}
If $\zeta_1$ and $\zeta_2$ are linearly independent eigenvectors of $M_{\cX}$, then the matrix $\Omega$ must be diagonal.
That is,
\begin{equation}
\omega_{12} = 0 \;, \qquad
\omega_{21} = 0 \;,
\label{eq:omega1221}
\end{equation}
where $\omega_{ij}$ denotes the $(i,j)$-element of $\Omega$.
The equations of (\ref{eq:omega1221}) represent an alternative to $\gamma_{22} = 0$ and $\sigma_2 = 0$
that are significantly simpler when expressed in terms of $\tau_{\sL}$, $\delta_{\sL}$, $\tau_{\sR}$ and $\delta_{\sR}$.

To obtain expressions for $\zeta_1$ and $\zeta_2$,
let us assume that $(\cX \cY)_i \ne (\cY \cX)_i$ only for $i=0$ and $i=\alpha$, and $\varphi_0 = \psi_0$ (see \S\ref{sec:further}).
The point $\varphi_0$ lies on the switching manifold, and its image under $\alpha$ iterations of (\ref{eq:f}) following $\cX \cY$ also lies on the switching manifold.
In $(x,y)$-coordinates let us write $\varphi_0 = (0,\hat{y})$.
The value of $\hat{y}$ may be determined from the requirement that the $x$-component of $\varphi_{\alpha}$ is zero.
Also, $\varphi_0$ lies on the unstable manifold of $\left( x^{\cX}_0, y^{\cX}_0 \right)$. 
The point $\left( x^{\cX}_0, y^{\cX}_0 \right)$ is given by (\ref{eq:zStarX}),
and its unstable manifold has direction $\zeta_2$.
Therefore, when $\mu = 1$, there exists $\eta \in \mathbb{R}$ such that
\begin{equation}
\left( I - M_{\cX} \right)^{-1} P_{\cX} \left[ \begin{array}{c} 1 \\ 0 \end{array} \right] +
\eta \zeta_2 = \left[ \begin{array}{c} 0 \\ \hat{y} \end{array} \right] \;.
\label{eq:unstabIntCritPoint}
\end{equation}
By using $M_{\cX} \zeta_2 = \lambda_2 \zeta_2$, we may rearrange (\ref{eq:unstabIntCritPoint}) to obtain
\begin{equation}
\eta \left( 1 - \lambda_2 \right) \zeta_2 =
\left( I - M_{\cX} \right) \left[ \begin{array}{c} 0 \\ \hat{y} \end{array} \right] -
P_{\cX} \left[ \begin{array}{c} 1 \\ 0 \end{array} \right] \;.
\label{eq:unstabIntCritPoint2}
\end{equation}
Since we are free to choose the magnitude of $\zeta_2$, by (\ref{eq:unstabIntCritPoint2}) we may set
\begin{equation}
\zeta_2 =
\left( I - M_{\cX} \right) \left[ \begin{array}{c} 0 \\ \hat{y} \end{array} \right] -
P_{\cX} \left[ \begin{array}{c} 1 \\ 0 \end{array} \right] \;.
\label{eq:zeta2}
\end{equation}
In $(x,y)$-coordinates, the $u$ and $v$-axes have the same directions as $\zeta_1$ and $\zeta_2$, respectively.
Part (\ref{it:gY}) of Theorem \ref{th:codim3} tells us that $g^{\cY}$ maps the $v$-axis to the $u$-axis.
Therefore, $\zeta_1$ is a scalar multiple of $M_{\cY} \zeta_2$.
We could set $\zeta_1 = M_{\cY} \zeta_2$, but for the examples below it is more convenient to set
\begin{equation}
\zeta_1 = M_{\cX}^{-1} M_{\cY} \zeta_2 \;.
\label{eq:zeta1}
\end{equation}

In summary, (\ref{eq:deltaL}) and (\ref{eq:omega1221})
represent three restrictions on the parameter values of (\ref{eq:f})
for which the map has infinitely many admissible, stable $\cS[k]$-cycles,
where $\omega_{12}$ and $\omega_{21}$ are the off-diagonal elements of $\Omega$ (\ref{eq:QOmega}),
and $\zeta_1$ and $\zeta_2$ are given by (\ref{eq:zeta2}) and (\ref{eq:zeta1}).
We now find solutions to (\ref{eq:deltaL}) and (\ref{eq:omega1221}) for three different combinations of $\cX$ and $\cY$.

\subsubsection*{An example with $n_{\cX} = 3$}

Suppose\removableFootnote{
{\sc findExampleF4.m}
}
\begin{equation}
\cX = \sR \sL \sR \;, \qquad
\cY = \sL \sR \;,
\label{eq:XYRLRkLR}
\end{equation}
as in Fig.~\ref{fig:infFa}.
Here $\cX$ has one $\sL$ and three symbols total, i.e.~$l_{\cX} = 1$ and $n_{\cX} = 3$.
Thus by (\ref{eq:deltaL}), $\delta_{\sL} = \frac{1}{\delta_{\sR}^2}$.

Also $\alpha = 1$ (because $\cX \cY = \sR \sL \sR \sL \sR$ and $\cY \cX = \sL \sR \sR \sL \sR$, thus $(\cX \cY)_i \ne (\cY \cX)_i$ only for $i=0$ and $i=1$).
Hence we require that $\varphi_0 = (0,\hat{y})$ maps to the switching manifold under a single iteration of (\ref{eq:f}).
This implies $\hat{y} = -1$, when $\mu = 1$, with which (\ref{eq:zeta2}) gives
$\zeta_2 = \left[ -\tau_{\sR}-1 ,\; \delta_{\sR}-1 \right]^{\sf T}$,
and (\ref{eq:zeta1}) gives
$\zeta_1 = A_{\sR}^{-1} \zeta_2 = \left[ \frac{1}{\delta_{\sR}}-1 ,\; -\frac{\tau_{\sR}}{\delta_{\sR}}-1 \right]^{\sf T}$.
By substituting these into (\ref{eq:QOmega}) we obtain
\begin{align}
\omega_{12} &= \frac{1}
{\delta_{\sR} \left( \delta_{\sR}^2 + \tau_{\sR} \delta_{\sR} - \delta_{\sR} + \tau_{\sR}^2 + \tau_{\sR} + 1 \right)}
\Big( -\tau_{\sL} \delta_{\sR}^3 + \tau_{\sR} \delta_{\sR}^3 - \tau_{\sL} \tau_{\sR}^2 \delta_{\sR}^3
+ \tau_{\sR}^2 \delta_{\sR}^3 + \tau_{\sR} - \tau_{\sR}^2 \delta_{\sR} \nonumber \\
&\quad- \tau_{\sR} \delta_{\sR} - 2 \delta_{\sR}
+ \tau_{\sR}^2 + 1 - \tau_{\sL} \tau_{\sR} \delta_{\sR}^2 + \tau_{\sL} \delta_{\sR}^4 + \tau_{\sR} \delta_{\sR}^4
+ \delta_{\sR}^4 - \tau_{\sL} \tau_{\sR}^2 \delta_{\sR}^2 - \tau_{\sL} \tau_{\sR}^3 \delta_{\sR}^2 + \delta_{\sR}^2 \Big) \;,
\label{eq:omega12F} \\
\omega_{21} &= \frac{1}
{\delta_{\sR}^2 \left( \delta_{\sR}^2 + \tau_{\sR} \delta_{\sR} - \delta_{\sR} + \tau_{\sR}^2 + \tau_{\sR} + 1 \right)}
\Big( -\tau_{\sL} \delta_{\sR}^3 + 2 \delta_{\sR}^4 + \tau_{\sR} \delta_{\sR}^3
+ \tau_{\sL} \delta_{\sR}^4 - \tau_{\sR} \delta_{\sR}^4 + \tau_{\sL} \tau_{\sR}^2 \delta_{\sR}^3 - \delta_{\sR}^5 \nonumber \\
&\quad- \tau_{\sR}^2 \delta_{\sR}^3 - \tau_{\sR} \delta_{\sR} - \delta_{\sR}
+ \tau_{\sL} \tau_{\sR}^2 \delta_{\sR}^2 + \tau_{\sL} \tau_{\sR}^3 \delta_{\sR}^2 + \tau_{\sR}^2 \delta_{\sR}^2
- \tau_{\sR}^2 - \tau_{\sR} + \tau_{\sL} \tau_{\sR} \delta_{\sR}^4 - \delta_{\sR}^3 \Big) \;.
\label{eq:omega21F}
\end{align}
We wish to solve $\omega_{12} = \omega_{21} = 0$.
To this end we notice that the sum of (\ref{eq:omega12F}) and (\ref{eq:omega21F}) factors conveniently:
\begin{equation}
\omega_{12} + \omega_{21} = \frac{\left( \delta_{\sR} - 1 \right)
\left( \tau_{\sR} \delta_{\sR}^3 + \tau_{\sL} \delta_{\sR}^3 - \tau_{\sL} \tau_{\sR}^2 \delta_{\sR}^2 + \tau_{\sR} + 2 \delta_{\sR}^2 \right)
\left( \delta_{\sR} + \tau_{\sR} + 1 \right)}
{\delta_{\sR}^2 \left( \delta_{\sR}^2 + \tau_{\sR} \delta_{\sR} - \delta_{\sR} + \tau_{\sR}^2 + \tau_{\sR} + 1 \right)} \;.
\label{eq:sumomega1221}
\end{equation}

The first factor in the numerator of (\ref{eq:sumomega1221}) is zero when $\delta_{\sR} = 1$\removableFootnote{
with which we find that $\omega_{12} = 0$ when $\tau_{\sL} = \frac{1}{\tau_{\sR}}$.
}.
For any $\tau_{\sR} < -1$, this combination of parameter values gives infinitely many admissible, stable $\cS[k]$-cycles,
but the $\cS[k]$-cycles are not asymptotically stable because the eigenvalues of $M_{\cS[k]}$ lie on the unit circle.
In this case (\ref{eq:f}) is area-preserving and the $\cS[k]$-cycles are elliptic.

The second factor in the numerator of (\ref{eq:sumomega1221}) is zero when 
$\tau_{\sL} = \frac{\tau_{\sR} \delta_{\sR}^3 + \tau_{\sR} + 2 \delta_{\sR}^2}
{\delta_{\sR}^2 \left( \tau_{\sR}^2 - \delta_{\sR} \right)}$.
However, we then have $\omega_{12} = \frac{\det(Q) \delta_{\sR}}{\tau_{\sR}^2 - \delta_{\sR}}$,
which cannot be zero because $Q$ must be non-singular.

Finally, the third factor in the numerator of (\ref{eq:sumomega1221}) is zero when $\tau_{\sR} = -1-\delta_{\sR}$.
Then $\omega_{12} = 0$ when $\tau_{\sL} = -1 + \frac{1}{\delta_{\sR}} - \frac{1}{\delta_{\sR}^2 \left( \delta_{\sR}^2 + 1 \right)}$.
Therefore, $\delta_{\sR}$ is undetermined and
\begin{equation}
\tau_{\sL} = -1 + \frac{1}{\delta_{\sR}} - \frac{1}{\delta_{\sR}^2 (\delta_{\sR}^2+1)} \;, \qquad
\delta_{\sL} = \frac{1}{\delta_{\sR}^2} \;, \qquad
\tau_{\sR} = -1 - \delta_{\sR} \;.
\label{eq:paramRLRkLR}
\end{equation}
With (\ref{eq:paramRLRkLR}), $\delta_{\sR} > 1$ and $\mu = 1$,
(\ref{eq:f}) indeed has infinitely many admissible, attracting $\cS[k]$-cycles.
This is proved in \S\ref{sec:sufficient}.
Fig.~\ref{fig:infFa} illustrates this scenario with $\delta_{\sR} = \frac{3}{2}$.
For different values of $\delta_{\sR} > 1$ the primary features of the phase portrait are unchanged.

\subsubsection*{An example with $n_{\cX} = 4$}

Suppose\removableFootnote{
{\sc findExampleI2.m}
}
\begin{equation}
\cX = \sR \sL \sL \sR \;, \qquad
\cY = \sL \sL \sR \;.
\label{eq:XYRLLRkLLR}
\end{equation}
Then (\ref{eq:deltaL}) gives $\delta_{\sL} = \frac{1}{\delta_{\sR}}$.
Also $\alpha = 2$, which implies $\hat{y} = -1-\frac{1}{\tau_{\sL}}$, when $\mu = 1$.
By continuing with the above method we find that expressions for $\omega_{12}$ and $\omega_{21}$ are too complicated to include here
-- and the author has been unable to solve (\ref{eq:deltaL}) and (\ref{eq:omega1221}) analytically for this example --
but with $\tau_{\sL} = 0.5$, (\ref{eq:deltaL}) and (\ref{eq:omega1221}) admit the following approximate numerical solution
\begin{equation}
\tau_{\sL} = 0.5 \;, \qquad
\delta_{\sL} = \frac{1}{\delta_{\sR}} \;, \qquad
\tau_{\sR} = -1.139755486 \;, \qquad
\delta_{\sR} = 1.378851759 \;.
\label{eq:paramI}
\end{equation}
A phase portrait of (\ref{eq:f}) with $\mu = 1$ and (\ref{eq:paramI}) is shown in Fig.~\ref{fig:infI}.
Here at least eight $\cS[k]$-cycles are admissible and attracting, which suggests that 
with the exact solution to (\ref{eq:deltaL}) and (\ref{eq:omega1221}) 
infinitely many $\cS[k]$-cycles are admissible and attracting.

\begin{figure}[t!]
\begin{center}
\setlength{\unitlength}{1cm}
\begin{picture}(15,7.5)
\put(0,0){\includegraphics[height=7.5cm]{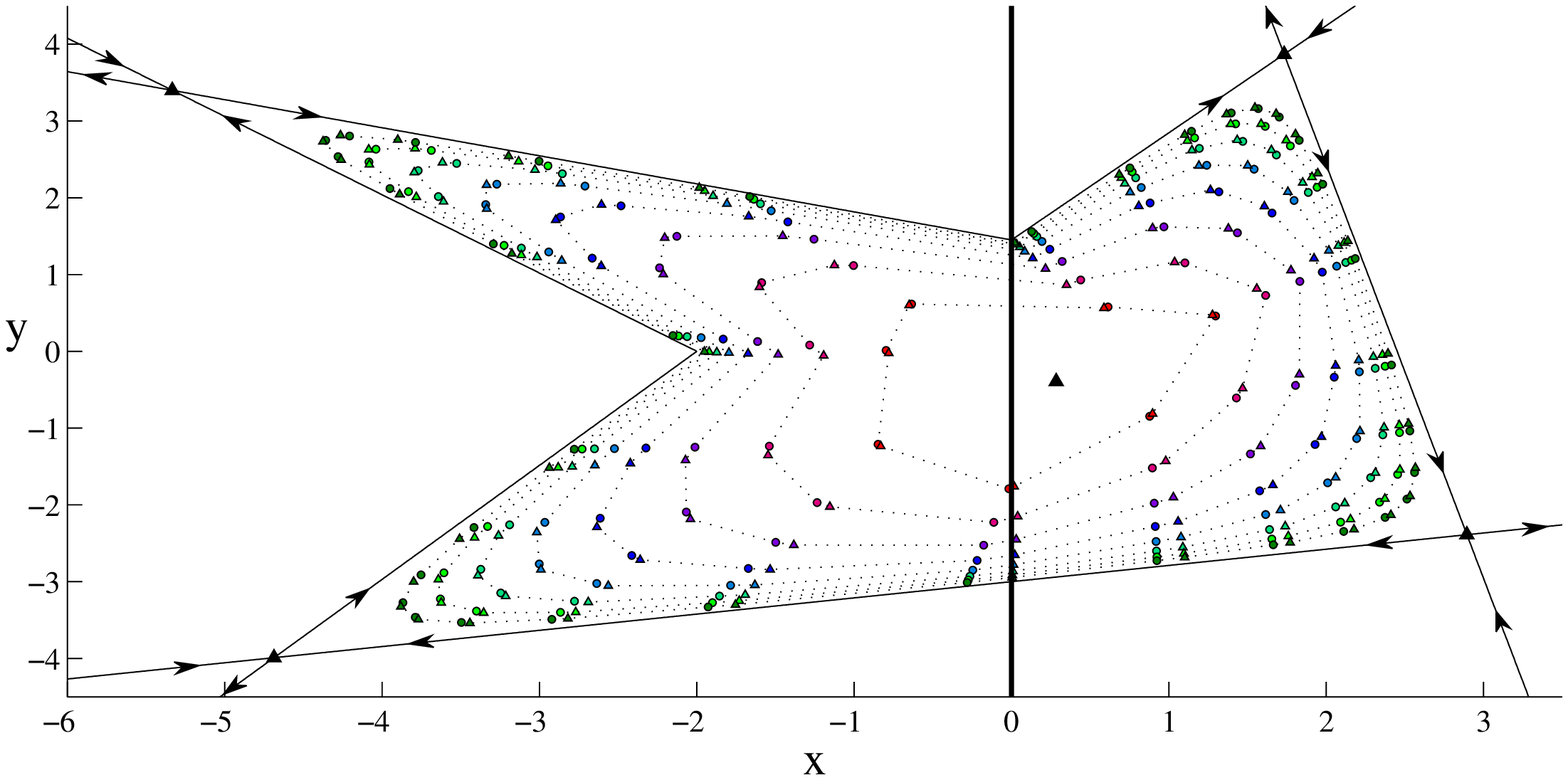}}
\end{picture}
\caption{
A phase portrait of (\ref{eq:f}) with $\mu = 1$ and (\ref{eq:paramI}).
These are approximate parameter values for the infinite coexistence of admissible, attracting $\cS[k]$-cycles,
where $\cS[k] = \cX^k \cY$, and $\cX = \sR \sL \sL \sR$ and $\cY = \sL \sL \sR$ (\ref{eq:XYRLLRkLLR}).
The $\cX$-cycle is of saddle-type and its stable and unstable manifolds are indicated.
As in Fig.~\ref{fig:infFa}, $\cS[k]$-cycles are indicated by small circles for $k=1$ to $k=8$,
and $\cS'[k]$-cycles (where $\cS'[k] = \cX^k \cY^{\overline{0}}$)
are indicated by small triangles for the same values of $k$.
\label{fig:infI}
}
\end{center}
\end{figure}

\subsubsection*{An example with $n_{\cX} = 5$}

Suppose\removableFootnote{
{\sc findExampleC3.m}
}
\begin{equation}
\cX = \sR \sL \sR \sL \sR \;, \qquad
\cY = \sL \sR \;.
\label{eq:XYRLRLRkLR}
\end{equation}
Here (\ref{eq:deltaL}) gives $\delta_{\sL} = \delta_{\sR}^{-\frac{3}{2}}$.
Also $\alpha = 1$, thus $\hat{y} = -1$ when $\mu = 1$.
As with the previous example, it does not appear to be possible to solve (\ref{eq:deltaL}) and (\ref{eq:omega1221}) analytically.
An approximate numerical solution to (\ref{eq:deltaL}) and (\ref{eq:omega1221}) is
\begin{equation}
\tau_{\sL} = -0.7 \;, \qquad
\delta_{\sL} = \delta_{\sR}^{-\frac{3}{2}} \;, \qquad 
\tau_{\sR} = -3.308423793 \;, \qquad
\delta_{\sR} = 1.659870677 \;,
\label{eq:paramC}
\end{equation}
which is illustrated in Fig.~\ref{fig:infC}.
As with the previous example, from Fig.~\ref{fig:infC} we infer that with the exact solution to
(\ref{eq:deltaL}) and (\ref{eq:omega1221}), 
(\ref{eq:f}) has infinitely many admissible, attracting $\cS[k]$-cycles.

\begin{figure}[t!]
\begin{center}
\setlength{\unitlength}{1cm}
\begin{picture}(15,7.5)
\put(0,0){\includegraphics[height=7.5cm]{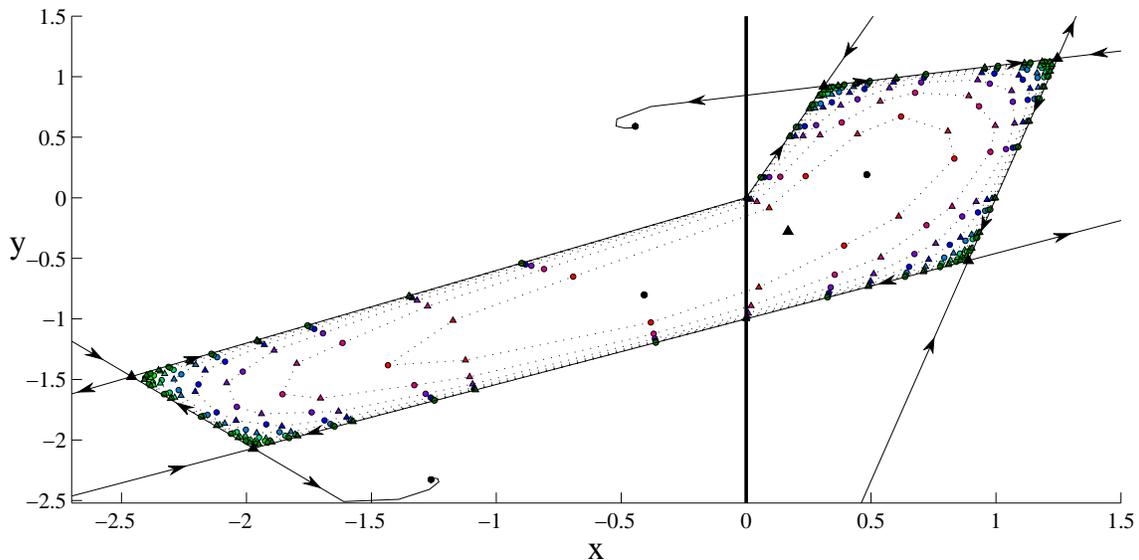}}
\end{picture}
\caption{
A phase portrait of (\ref{eq:f}) with $\mu = 1$ and (\ref{eq:paramC}), using the same conventions as Figs.~\ref{fig:infFa} and \ref{fig:infI}.
The parameter values approximate those admitting infinite coexistence, with $\cX$ and $\cY$ given by (\ref{eq:XYRLRLRkLR}).
$\cS[k]$ and $\cS'[k]$-cycles are plotted for $k=1$ to $k=8$.
For the given parameters, the map (\ref{eq:f}) also has
an unstable fixed point (the isolated triangle),
an attracting $\sR \sL$-cycle (the two isolated circles near the middle of the figure),
and an attracting $\sR \sL \sR \sL \sL$-cycle (two points of this periodic solution are visible in the figure).
\label{fig:infC}
}
\end{center}
\end{figure}

\section{Verification of infinite coexistence}
\label{sec:sufficient}

We have shown that if the map (\ref{eq:f}) with $\mu = 1$ has infinitely many admissible, attracting $\cS[k]$-cycles
with $\cX = \sR \sL \sR$ and $\cY = \sL \sR$,
then, with reasonable assumptions given in sections \ref{sec:necessary} and \ref{sec:further},
the parameter values must satisfy (\ref{eq:paramRLRkLR}).
In this section it is shown that the additional restriction, $\delta_{\sR} > 1$, is sufficient for (\ref{eq:f}) to exhibit such infinite coexistence,
as indicated in the following theorem.

\begin{theorem}
Let $\cS[k] = \left( \sR \sL \sR \right)^k \sL \sR$, and $\cS'[k] = \left( \sR \sL \sR \right)^k \sR \sR$.
Let $\mu = 1$, $\delta_{\sR} > 1$, and suppose that the remaining parameter values of (\ref{eq:f}) are given by (\ref{eq:paramRLRkLR}).
Then for all $k \ge 1$,
(\ref{eq:f}) has a unique, admissible, asymptotically stable $\cS[k]$-cycle,
and a unique, admissible, saddle-type $\cS'[k]$-cycle.
\label{th:RLRkLR}
\end{theorem}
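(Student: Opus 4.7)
The plan is to work in the $(u,v)$-coordinates of Section \ref{sec:necessary}, in which $g^{\cX}$ is the diagonal map $\mathrm{diag}(\lambda_1, 1/\lambda_1)$, $g^{\cY}$ takes the reduced form (\ref{eq:gY2}), and $g^{\cY^{\overline{0}}}$ takes the form (\ref{eq:gY0}) from the proof of Theorem \ref{th:saddles}. A preliminary step substitutes (\ref{eq:paramRLRkLR}) into the auxiliary quantities to obtain $\lambda_1$, $\gamma_{11}$, $\gamma_{12}$, $\gamma_{21}$, $\sigma_1$, $\hat v$, and the $\xi_{ij}$ of $g^{\cY^{\overline{0}}}$ as explicit rational functions of $\delta_{\sR}$. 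It is then verified once and for all that for $\delta_{\sR} > 1$ one has $0 < \lambda_1 < 1$ and the signs of $\gamma_{11}$, $\gamma_{12}$, $\gamma_{21}$, $\sigma_1$, $\xi_{22}$ are fixed; these facts drive all subsequent estimates.

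For the $\cS[k]$-cycle, formula (\ref{eq:gSk}) yields $\mathrm{trace}(M_{\cS[k]}) = \gamma_{11}\lambda_1^k$ and $\det(M_{\cS[k]}) = -\gamma_{12}\gamma_{21}$. Uniqueness reduces, via Lemma \ref{le:existence}, to $1 - \gamma_{12}\gamma_{21} - \gamma_{11}\lambda_1^k \ne 0$, and the three stability conditions of Lemma \ref{le:stability} collapse to the uniform-in-$k$ inequalities $1 - \gamma_{12}\gamma_{21} > |\gamma_{11}|\lambda_1$ and $\gamma_{12}\gamma_{21} > -1$, each a one-variable algebraic condition on $\delta_{\sR}$. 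Admissibility is handled by writing all $3k+2$ cycle points in closed form: the $k+1$ anchors $w_{3j}^{\cS[k]}$ come from (\ref{eq:wSkj2}), and the $2k+1$ intermediate points are obtained by applying one or two of $g^{\sR}, g^{\sL}$ to these anchors. Converting back to $x$-coordinates via (\ref{eq:uv}) and applying Lemma \ref{le:admissibility} yields $3k+2$ sign inequalities in $\delta_{\sR}$ and $k$; because the $j$-dependence enters only as $\lambda_1^j$ and $\lambda_1^{k-j}$, both contained in $(0,1]$, with $j$-independent prefactors of fixed sign, this infinite family collapses to a finite, $k$-independent set (the limiting homoclinic orbit together with the boundary cases $j \in \{0, k-1, k\}$) which can then be checked algebraically.

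For the $\cS'[k]$-cycle, the return map (\ref{eq:gY0Xk}) gives $\mathrm{trace}(M_{\cS'[k]}) = \xi_{11}\lambda_1^k + \xi_{22}\lambda_1^{-k}$ and $\det(M_{\cS'[k]}) = -\xi_{12}\xi_{21}$; one eigenvalue grows like $\xi_{22}\lambda_1^{-k}$ and the other shrinks like $-\xi_{12}\xi_{21}\lambda_1^k/\xi_{22}$, so the saddle property is automatic for all large $k$, with the finitely many small-$k$ cases checked by direct substitution. Uniqueness follows from the same determinant argument. Admissibility is more delicate because the limit orbit of Theorem \ref{th:saddles} places two points $\varphi_0$ and $\varphi_\alpha$ on the switching manifold: the plan is to expand the $\cS'[k]$-cycle points to first order in $\lambda_1^k$ near these two boundary points and confirm that their $x$-coordinates approach zero from the side dictated by $\cS'[k]$, then handle the remaining points by the same monotonicity reduction used above. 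The main obstacle throughout is this uniform admissibility analysis: for $\cS[k]$ it requires correctly isolating the monotonicity in $j$, and for $\cS'[k]$ it requires signing the leading $O(\lambda_1^k)$ correction near the two switching-manifold limit points.
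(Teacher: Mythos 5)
Your plan is, at its core, the same direct-verification strategy the paper uses: compute $\det(M_{\cS[k]})$ and ${\rm trace}(M_{\cS[k]})$ exactly, check the three stability inequalities of Lemma \ref{le:stability} uniformly in $k$, and reduce the $3k+2$ admissibility inequalities to a $k$-independent finite set by exploiting the fact that each cycle point's $x$-coordinate has the form $(A\lambda_1^j + B\lambda_1^{k-j})/D_k + C$, so that its extrema over $j$ occur at $j=0$, $j=k-1$, or a single interior turning point. The paper does this bookkeeping in the original coordinates via the determinants $\det\left(P_{\cS[k]^{(i)}}\right)$ (Lemmas \ref{le:MS} and \ref{le:detPSi}), whereas you do it in the eigencoordinates $(u,v)$; these are equivalent, since diagonalizing $M_{\sR\sL\sR}$ is exactly the passage to $(u,v)$, and the sign of $x^{\cS}_i$ equals the sign of $\det\left(P_{\cS^{(i)}}\right)\mu/\det(I-M_{\cS})$.

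Two points need attention. First, you import the reduced forms (\ref{eq:gX}) and (\ref{eq:gY2}), i.e.\ $\lambda_2 = 1/\lambda_1$, $\gamma_{22}=0$ and $\sigma_2=0$, which in the paper are \emph{consequences} of assumed infinite coexistence (Theorem \ref{th:codim3}); here they must be verified directly from (\ref{eq:paramRLRkLR}) as part of your preliminary computation, otherwise the argument is circular. This is a finite computation, but it must be stated. Second, and more seriously, your treatment of the $\cS'[k]$-cycles --- ``the saddle property is automatic for all large $k$, with the finitely many small-$k$ cases checked by direct substitution'', plus a first-order expansion in $\lambda_1^k$ for admissibility near $\varphi_0$ and $\varphi_\alpha$ --- does not by itself prove the statement for \emph{all} $k\ge1$: without an explicit threshold beyond which ``large $k$'' applies, the set of remaining cases is not known to be finite in any checkable sense, and a leading-order expansion does not control the sign of an $x$-coordinate tending to zero unless the error is bounded uniformly in $k$. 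The remedy is what the paper does: exact closed-form expressions such as ${\rm trace}\left(M_{\cS'[k]}\right) = -\delta_{\sR}\lambda_1^k + \left(\delta_{\sR}^2+\delta_{\sR}+1\right)\lambda_1^{-k}$ and $\det\left(P_{\cS'[k]^{(3k)}}\right) = -\left(1-\lambda_1^k\right)/\left(\delta_{\sR}^2-\delta_{\sR}+1\right)$, whose signs are manifest for every $k\ge1$; your framework can produce these, but the plan as written stops short. (A small slip: $\det\left(M_{\cS'[k]}\right)$ is $\xi_{11}\xi_{22}-\xi_{12}\xi_{21} = \delta_{\sR}^2$, not $-\xi_{12}\xi_{21}$.)
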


The theorem is proved below by directly verifying
asymptotic stability of the $\cS[k]$-cycles, and admissibility of the $\cS[k]$ and $\cS'[k]$-cycles.
By the results of \S\ref{sec:periodic}, this may be done by calculating the determinant and trace of $M_{\cS[k]}$ and $M_{\cS'[k]}$,
and the determinant of each $P_{\cS[k]^{(i)}}$ and $P_{\cS'[k]^{(i)}}$.
Since the details of these calculations are relatively lengthy and involve significant repetition,
for brevity the majority of the calculations for the $\cS'[k]$-cycles are omitted.
We begin by computing the determinant and trace of $M_{\cS[k]}$.

\begin{lemma}
Let $\cS[k] = \left( \sR \sL \sR \right)^k \sL \sR$,
suppose $\delta_{\sR} \ne 0$ and that the parameter values of (\ref{eq:f}) satisfy (\ref{eq:paramRLRkLR}).
Then for all $k \ge 1$,
\begin{align}
\det \left( M_{\cS[k]} \right) &= \frac{1}{\delta_{\sR}} \;, \label{eq:detMS} \\
{\rm trace} \left( M_{\cS[k]} \right) &= \frac{-(\delta_{\sR} + 1) \lambda_1^k}{\delta_{\sR}^2 + 1} \;, \label{eq:traceMS}
\end{align}
where
\begin{equation}
\lambda_1 = \frac{\delta_{\sR}}{\delta_{\sR}^2+1} \;.
\label{eq:lambda1}
\end{equation}
\label{le:MS}
\end{lemma}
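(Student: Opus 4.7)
The plan is to exploit the factorization $M_{\cS[k]} = M_\cY M_\cX^k$, which follows from (\ref{eq:MSPS}) applied to $\cS[k] = \cX^k \cY$ with $\cX = \sR\sL\sR$ and $\cY = \sL\sR$; explicitly $M_\cY = A_\sR A_\sL$ and $M_\cX = A_\sR A_\sL A_\sR$. Multiplicativity gives $\det(M_{\cS[k]}) = \det(M_\cY)\det(M_\cX)^k$, and substituting (\ref{eq:paramRLRkLR}) yields $\det(M_\cX) = \delta_\sR^2\delta_\sL = 1$ and $\det(M_\cY) = \delta_\sR\delta_\sL = 1/\delta_\sR$, establishing (\ref{eq:detMS}).

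For the trace, I would first compute ${\rm trace}(M_\cX) = \tau_\sL(\tau_\sR^2 - \delta_\sR) - \tau_\sR(\delta_\sL + \delta_\sR)$ directly from the product $A_\sR A_\sL A_\sR$. Substituting (\ref{eq:paramRLRkLR}), and using the factorizations $1 + \delta_\sR^3 = (1+\delta_\sR)(1-\delta_\sR+\delta_\sR^2)$ and $1 + \delta_\sR^5 = (1+\delta_\sR)(\delta_\sR^4 - \delta_\sR^3 + \delta_\sR^2 - \delta_\sR + 1)$ to simplify $\tau_\sL$, this collapses after clearing denominators to $(\delta_\sR^4 + 3\delta_\sR^2 + 1)/(\delta_\sR(\delta_\sR^2+1))$. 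Combined with $\det(M_\cX) = 1$, the characteristic equation $\lambda^2 - {\rm trace}(M_\cX)\lambda + 1 = 0$ is verified to have roots $\lambda_1 = \delta_\sR/(\delta_\sR^2+1)$ and $\lambda_2 = 1/\lambda_1$, which are real and distinct.

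Next, diagonalize $M_\cX = Q\,{\rm diag}(\lambda_1,\lambda_2)\,Q^{-1}$, so that ${\rm trace}(M_{\cS[k]}) = {\rm trace}\bigl((Q^{-1}M_\cY Q)\,{\rm diag}(\lambda_1^k,\lambda_2^k)\bigr) = \beta_{11}\lambda_1^k + \beta_{22}\lambda_2^k$, where $\beta_{ij}$ denotes the $(i,j)$-entry of $Q^{-1}M_\cY Q$ (these are the $\gamma_{ij}$ of (\ref{eq:gY})). The construction of (\ref{eq:paramRLRkLR}) in \S\ref{sec:finding} fixes $\zeta_1 = M_\cX^{-1}M_\cY\zeta_2$ via (\ref{eq:zeta1}) and imposes (\ref{eq:omega1221}) so that $\zeta_1$ and $\zeta_2$ are genuine $\lambda_1$- and $\lambda_2$-eigenvectors of $M_\cX$. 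Consequently $M_\cY\zeta_2 = M_\cX\zeta_1 = \lambda_1\zeta_1$, which in the $\{\zeta_1,\zeta_2\}$-basis forces $\beta_{22} = 0$ (and $\beta_{12} = \lambda_1$). Since ${\rm trace}$ is basis-independent, $\beta_{11} = {\rm trace}(M_\cY) = \tau_\sR\tau_\sL - \delta_\sL - \delta_\sR$, which a parallel simplification reduces to $-(\delta_\sR+1)/(\delta_\sR^2+1)$ under (\ref{eq:paramRLRkLR}). This is (\ref{eq:traceMS}).

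The main obstacle is the pair of algebraic simplifications -- for ${\rm trace}(M_\cX)$ and ${\rm trace}(M_\cY)$ -- under the awkward expression $\tau_\sL = -1 + 1/\delta_\sR - 1/\bigl(\delta_\sR^2(\delta_\sR^2+1)\bigr)$. Both collapse cleanly after clearing denominators and exploiting the cyclotomic-type factorizations involving $(1+\delta_\sR)$, but neither is immediate. The structural steps (the decomposition $M_{\cS[k]} = M_\cY M_\cX^k$, the diagonalization of $M_\cX$, and extracting $\beta_{22} = 0$ from the construction in \S\ref{sec:finding}) are short, and essentially all the computational effort lies in confirming these two rational-function identities.
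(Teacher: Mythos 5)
Your proof is correct and rests on the same skeleton as the paper's: factor $M_{\cS[k]} = M_{\cY} M_{\cX}^k$ with $M_{\cY} = A_{\sR}A_{\sL}$ and $M_{\cX} = A_{\sR}A_{\sL}A_{\sR}$, get the determinant from multiplicativity, and diagonalize $M_{\cX}$ to handle the trace. Where you diverge is the last step: the paper writes out $M_{\sR\sL\sR}^k$ entrywise from the explicit eigenvector matrix $Q$ and then evaluates ${\rm trace}\left(A_{\sR}A_{\sL}M_{\sR\sL\sR}^k\right)$ by brute force (symbolically), whereas you observe that $M_{\cY}\zeta_2 = M_{\cX}\zeta_1 = \lambda_1\zeta_1$ forces the $(2,2)$-entry of $Q^{-1}M_{\cY}Q$ to vanish, so that ${\rm trace}\left(M_{\cS[k]}\right) = {\rm trace}\left(M_{\cY}\right)\lambda_1^k$ and only the two rational-function identities for ${\rm trace}\left(M_{\cX}\right)$ and ${\rm trace}\left(M_{\cY}\right)$ remain; both of those check out (I verified $\frac{\delta_{\sR}^4+3\delta_{\sR}^2+1}{\delta_{\sR}(\delta_{\sR}^2+1)} = \lambda_1+\lambda_1^{-1}$ and ${\rm trace}\left(M_{\cY}\right) = \frac{-(\delta_{\sR}+1)}{\delta_{\sR}^2+1}$). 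This is genuinely slicker than the paper's computation. The one thing to tighten: you import from \S\ref{sec:finding} the claim that $\zeta_1$ and $\zeta_2$ of (\ref{eq:zeta1})--(\ref{eq:zeta2}) are eigenvectors for $\lambda_1$ and $\lambda_2$ respectively, but \S\ref{sec:finding} is the heuristic derivation of (\ref{eq:paramRLRkLR}) (its $\omega_{ij}$ formulas are stated, not proved, and the eigenvalue labelling is not checked there), while \S\ref{sec:sufficient} is meant to be an independent verification. Since under (\ref{eq:paramRLRkLR}) one has $\zeta_2 = (\delta_{\sR},\,\delta_{\sR}-1)^{\sf T}$ and $\zeta_1 = \frac{1-\delta_{\sR}}{\delta_{\sR}}\left(1,\,\frac{-1}{\delta_{\sR}-1}\right)^{\sf T}$, you should simply verify $M_{\cX}\zeta_1 = \lambda_1\zeta_1$ and $M_{\cX}\zeta_2 = \lambda_2\zeta_2$ directly (a two-line computation, and exactly the content of the paper's explicit $Q$), after which your argument is self-contained.
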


\begin{proof} 
Here $M_{\cS[k]}$ is the product of $k+1$ instances of $A_{\sL}$, and $2k+1$ instances of $A_{\sR}$,
therefore $\det \left( M_{\cS[k]} \right) = \delta_{\sL}^{k+1} \delta_{\sR}^{2k+1}$.
By substituting $\delta_{\sL} = \frac{1}{\delta_{\sR}^2}$ into this expression we obtain (\ref{eq:detMS}).

More effort is required to compute ${\rm trace} \left( M_{\cS[k]} \right)$.
By the definitions of $\cS[k]$ and $M_{\cS}$ (\ref{eq:MSPS}), we can write $M_{\cS[k]} = A_{\sR} A_{\sL} M_{\sR \sL \sR}^k$.
An evaluation of $M_{\sR \sL \sR} = A_{\sR} A_{\sL} A_{\sR}$ using (\ref{eq:f}) and (\ref{eq:paramRLRkLR}) produces
\begin{equation}
M_{\sR \sL \sR} = \frac{1}{\delta_{\sR}^2 + 1} \left[ \begin{array}{cc}
(\delta_{\sR} + 1)^2 & \delta_{\sR}^3 - 1 \\
\delta_{\sR}^2 - \frac{1}{\delta_{\sR}} & (\delta_{\sR}^2 + 1)(\delta_{\sR} - 1) + \frac{1}{\delta_{\sR}}
\end{array} \right] \;.
\end{equation}
The matrix $M_{\sR \sL \sR}$ has eigenvalues $\lambda_1$ (\ref{eq:lambda1}) and $\lambda_2 = \frac{1}{\lambda_1}$.
To take powers of $M_{\sR \sL \sR}$ we let 
$Q = \left[ \begin{array}{cc}
1 & \frac{\delta_{\sR}}{\delta_{\sR} - 1} \\
\frac{-1}{\delta_{\sR} - 1} & 1
\end{array} \right]$,
as the columns of this matrix are eigenvectors of $M_{\sR \sL \sR}$.
Then $M_{\sR \sL \sR}^k = Q^{-1} \left[ \begin{array}{cc} \lambda_1^k & 0 \\ 0 & \frac{1}{\lambda_1^k} \end{array} \right] Q$, and consequently
\begin{equation}
M_{\sR \sL \sR}^k = \frac{1}{1 + \frac{\delta_{\sR}}{(\delta_{\sR} - 1)^2}} \left[ \begin{array}{cc}
\lambda_1^k + \frac{\delta_{\sR}}{(\delta_{\sR}-1)^2} \lambda_1^{-k} &
\frac{-\delta_{\sR}}{\delta_{\sR}-1} \left( \lambda_1^k - \lambda_1^{-k} \right) \\
\frac{-1}{\delta_{\sR}-1} \left( \lambda_1^k - \lambda_1^{-k} \right) &
\frac{\delta_{\sR}}{(\delta_{\sR}-1)^2} \lambda_1^k + \lambda_1^{-k}
\end{array} \right] \;.
\label{eq:MRLRk}
\end{equation}
An evaluation of ${\rm trace} \left( A_{\sR} A_{\sL} M_{\sR \sL \sR}^k \right)$ using (\ref{eq:MRLRk}) produces (\ref{eq:traceMS}).
\end{proof}

Next we derive expressions for the determinant of each $P_{\cS[k]^{(i)}}$.
Since $\cS[k]$ has period $3k+2$, we require $\det \left( P_{\cS[k]^{(i)}} \right)$ for each $i = 0, \ldots, 3k+1$.

\begin{lemma}
Let $\cS[k] = \left( \sR \sL \sR \right)^k \sL \sR$,
suppose $\delta_{\sR} \ne 0$ and that the parameter values of (\ref{eq:f}) satisfy (\ref{eq:paramRLRkLR}).
Then for all $j = 0,\ldots,k-1$,
\begin{align}
\det \left( P_{\cS[k]^{(3j)}} \right) &= \frac{1}{\delta_{\sR}^2-\delta_{\sR}+1} \left(
\delta_{\sR}(\delta_{\sR} + 1) +
\frac{\delta_{\sR}^2(\delta_{\sR}+1)}{\delta_{\sR}^2 + 1} \lambda_1^k -
(\delta_{\sR}^2+\delta_{\sR}+1) \lambda_1^{k-j} -
\frac{\delta_{\sR}^3-1}{\delta_{\sR}^2 + 1} \lambda_1^j \right) \;, \label{eq:Ps3j} \\
\det \left( P_{\cS[k]^{(3j+1)}} \right) &= -\frac{1}{\delta_{\sR}^2-\delta_{\sR}+1} \bigg(
(\delta_{\sR}+1)(\delta_{\sR}^2+1) +
\delta_{\sR} (\delta_{\sR}+1) \lambda_1^k \nonumber \\
&\quad-\frac{(\delta_{\sR}^2+1)(\delta_{\sR}^2+\delta_{\sR}+1)}{\delta_{\sR}} \lambda_1^{k-j} -
\frac{\delta_{\sR}^2(\delta_{\sR}^2+\delta_{\sR}+1)}{\delta_{\sR}^2+1} \lambda_1^j \bigg) \;, \label{eq:Ps3jp1} \\
\det \left( P_{\cS[k]^{(3j+2)}} \right) &= \frac{1}{\delta_{\sR}^2-\delta_{\sR}+1} \bigg(
\frac{\delta_{\sR}+1}{\delta_{\sR}^2} +
\frac{\delta_{\sR}+1}{\delta_{\sR} (\delta_{\sR}^2+1)} \lambda_1^k \nonumber \\
&\quad+\frac{(\delta_{\sR}-1)(\delta_{\sR}^2+\delta_{\sR}+1)(\delta_{\sR}^2+1)}{\delta_{\sR}^3} \lambda_1^{k-j} -
\frac{\delta_{\sR}^2+\delta_{\sR}+1}{(\delta_{\sR}^2+1)^2} \lambda_1^j \bigg) \;, \label{eq:Ps3jp2} \\
\det \left( P_{\cS[k]^{(3k)}} \right) &= -\frac{1 - \lambda_1^k}{\delta_{\sR}^2 - \delta_{\sR} + 1} \;, \label{eq:Ps3k} \\
\det \left( P_{\cS[k]^{(3k+1)}} \right) &=
\frac{(\delta_{\sR}^5 + \delta_{\sR}^3 + \delta_{\sR} - 1) \lambda_1^k + 1}
{\delta_{\sR}^2 (\delta_{\sR}^2 + 1) (\delta_{\sR}^2 - \delta_{\sR} + 1)} \;, \label{eq:Ps3kp1}
\end{align}
where $\lambda_1$ is given by (\ref{eq:lambda1}).
\label{le:detPSi}
\end{lemma}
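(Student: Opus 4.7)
The plan is to use the formula $x^{\cS}_i = \det(P_{\cS^{(i)}})\, \mu / \det(I - M_{\cS})$ displayed between Lemmas \ref{le:existence} and \ref{le:admissibility}. With $\mu = 1$, this rewrites Lemma \ref{le:detPSi} as the assertion that $\det(P_{\cS[k]^{(i)}}) = x^{\cS[k]}_i\, \det(I - M_{\cS[k]})$ equals the appropriate case of (\ref{eq:Ps3j})--(\ref{eq:Ps3kp1}). The factor $\det(I - M_{\cS[k]}) = 1 - {\rm trace}(M_{\cS[k]}) + \det(M_{\cS[k]})$ is immediate from Lemma \ref{le:MS}, so the task reduces to computing the $x$-component of each of the $3k+2$ points of the $\cS[k]$-cycle.

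To compute those points I would exploit the block structure $\cS[k] = (\sR\sL\sR)^k \sL \sR$. Setting $z_j := \left( x^{\cS[k]}_{3j},\, y^{\cS[k]}_{3j} \right)^{\sf T}$ for $j = 0, 1, \ldots, k$, the first $3k$ symbols yield the affine recursion
\begin{equation}
z_{j+1} = M_{\sR\sL\sR}\, z_j + P_{\sR\sL\sR}\, b, \qquad j = 0, \ldots, k-1, \nonumber
\end{equation}
with $b = (1,0)^{\sf T}$, while the trailing $\sL\sR$ provides the closure condition $z_0 = A_{\sR} A_{\sL}\, z_k + (I + A_{\sR})\, b$. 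Writing $z_j = z^{\cX} + M_{\sR\sL\sR}^j\, w_0$, where $z^{\cX} = (I - M_{\sR\sL\sR})^{-1} P_{\sR\sL\sR}\, b$ is the fixed point of the $\cX$-iteration, solves the recursion automatically, and the closure then reduces to the single linear equation $(I - M_{\cS[k]})\, w_0 = A_{\sR} A_{\sL}\, z^{\cX} - z^{\cX} + (I + A_{\sR})\, b$, uniquely solvable since $M_{\cS[k]} = A_{\sR} A_{\sL}\, M_{\sR\sL\sR}^k$ and $\det(I - M_{\cS[k]}) \ne 0$. The diagonalization $M_{\sR\sL\sR} = Q \Lambda Q^{-1}$ with $\Lambda = {\rm diag}(\lambda_1, \lambda_1^{-1})$ and explicit $Q$ recorded in the proof of Lemma \ref{le:MS} then yields $z_j$ in closed form as a linear combination of $1$, $\lambda_1^j$, $\lambda_1^{k-j}$, and $\lambda_1^k$---precisely the structure visible in (\ref{eq:Ps3j})--(\ref{eq:Ps3kp1}).

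The remaining $x$-coordinates follow by one- or two-step affine iteration inside each block and in the tail: for $j = 0, \ldots, k-1$,
\begin{equation}
\left[ \begin{array}{c} x^{\cS[k]}_{3j+1} \\ y^{\cS[k]}_{3j+1} \end{array} \right] = A_{\sR}\, z_j + b\,, \qquad
\left[ \begin{array}{c} x^{\cS[k]}_{3j+2} \\ y^{\cS[k]}_{3j+2} \end{array} \right] = A_{\sL} A_{\sR}\, z_j + (A_{\sL} + I)\, b\,, \nonumber
\end{equation}
and similarly $\left( x^{\cS[k]}_{3k+1},\, y^{\cS[k]}_{3k+1} \right)^{\sf T} = A_{\sL}\, z_k + b$. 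Extracting the first component of each vector and multiplying by $\det(I - M_{\cS[k]})$ produces the five formulas of the lemma, one for each residue class of $i$ modulo $3$ together with the two tail indices $3k$ and $3k+1$.

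The main obstacle is purely algebraic. Once the parameter relations from (\ref{eq:paramRLRkLR}) are substituted, each $x^{\cS[k]}_i$ is a rational function in $\delta_{\sR}$ and $\lambda_1^k$ whose numerator and denominator must be simplified to the claimed closed forms. The key identities driving the simplification are $\tau_{\sR} = -1 - \delta_{\sR}$ and $\delta_{\sL} = \delta_{\sR}^{-2}$ from (\ref{eq:paramRLRkLR}), the eigenvalue relation $\lambda_1 \lambda_2 = 1$, and the factorization $\delta_{\sR}^3 + 1 = (\delta_{\sR}+1)(\delta_{\sR}^2 - \delta_{\sR} + 1)$, which is what ultimately produces the common denominator $\delta_{\sR}^2 - \delta_{\sR} + 1$ appearing throughout (\ref{eq:Ps3j})--(\ref{eq:Ps3kp1}). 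No new conceptual ingredient beyond careful bookkeeping is required.
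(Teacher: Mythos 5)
Your proposal is correct in outline but follows a genuinely different route from the paper. The paper never invokes the identity $x^{\cS}_i = \det\left( P_{\cS^{(i)}} \right) \mu / \det\left( I - M_{\cS} \right)$ here; instead it attacks $\det\left( P_{\cS[k]^{(i)}} \right)$ head-on. It writes out the shifted word, e.g.\ $\cS[k]^{(3j)} = (\sR\sL\sR)^{k-j}\sL\sR(\sR\sL\sR)^j$, expands $P_{\cS[k]^{(3j)}}$ via the definition (\ref{eq:MSPS}) into an expression built from $I + A_{\sR} + A_{\sR}A_{\sL}$ and two partial geometric series $\sum_p M_{\sR\sL\sR}^p$, evaluates those series using the same diagonalization $M_{\sR\sL\sR} = Q\,{\rm diag}(\lambda_1,\lambda_1^{-1})\,Q^{-1}$ you propose, and then computes the determinant symbolically. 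Your detour through the cycle points is arguably cleaner: the block recursion $z_{j+1} = M_{\sR\sL\sR} z_j + P_{\sR\sL\sR} b$ with the closure condition is easier to bookkeep than the nested expansion of $P_{\cS[k]^{(3j)}}$, and the one- and two-step corrections for the intermediate indices are set up correctly. Both proofs end in the same unavoidable symbolic grind (the paper admits to doing its determinant evaluations in {\sc matlab}).

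One caveat to patch: your route divides by $\det\left( I - M_{\cS[k]} \right)$, and under the lemma's hypothesis ($\delta_{\sR} \ne 0$ only, not $\delta_{\sR} > 1$) this quantity can vanish --- e.g.\ at $\delta_{\sR} = -1$ one has $\det\left( I - M_{\cS[k]} \right) = \frac{1}{\delta_{\sR}} + 1 + \frac{(\delta_{\sR}+1)\lambda_1^k}{\delta_{\sR}^2+1} = 0$ --- and likewise your explicit $Q$ degenerates at $\delta_{\sR} = 1$. Since both sides of each claimed identity are rational functions of $\delta_{\sR}$ whose stated denominators never vanish for real $\delta_{\sR} \ne 0$, establishing the identities on an open set of $\delta_{\sR}$ extends them to all $\delta_{\sR} \ne 0$; you should say this explicitly, or else restrict the intermediate computation and conclude by this density argument. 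The paper's direct expansion of $P_{\cS[k]^{(i)}}$ avoids the division entirely and so does not need this step.
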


\begin{proof}
Here we derive only (\ref{eq:Ps3j}).
Derivations of (\ref{eq:Ps3jp1}) and (\ref{eq:Ps3jp2}) are similar;
derivations of (\ref{eq:Ps3k}) and (\ref{eq:Ps3kp1}) are simpler.

Taking the left shift permutation of $\cS[k]$ a total of $3j$ places yields
\begin{equation}
\cS[k]^{(3j)} = \left( \sR \sL \sR \right)^{k-j} \sL \sR \left( \sR \sL \sR \right)^j \;.
\nonumber
\end{equation}
Careful use of (\ref{eq:MSPS}) produces\removableFootnote{
Also
\begin{align}
\cS[k]^{(3j+1}) &= \sL \sR \left( \sR \sL \sR \right)^{k-j-1} \sL \sR \left( \sR \sL \sR \right)^j \sR \;, \\
\cS[k]^{(3j+2}) &= \sR \left( \sR \sL \sR \right)^{k-j-1} \sL \sR \left( \sR \sL \sR \right)^j \sR \sL \;, \\
\cS[k]^{(3k}) &= \sL \sR \left( \sR \sL \sR \right)^k \;, \\
\cS[k]^{(3k+1}) &= \sR \left( \sR \sL \sR \right)^k \sL \;, \\
\end{align}
and thus
\begin{align}
P_{\cS[k]^{(3j+1)}} &= I + A_{\sR} \left( \sum_{p=0}^{j-1} M_{\sR \sL \sR}^p \right)
\left( I + A_{\sR} + A_{\sR} A_{\sL} \right) \nonumber \\
&\quad+A_{\sR} M_{\sR \sL \sR}^j
\left( I + A_{\sR} + A_{\sR} A_{\sL}
\left( \sum_{p=0}^{k-j-2} M_{\sR \sL \sR}^p \right)
\left( I + A_{\sR} + A_{\sR} A_{\sL} \right) +
M_{\sR \sL \sR}^{k-j-1} \left( I + A_{\sR} \right) \right) \;, \\
P_{\cS[k]^{(3j+2)}} &= I + A_{\sL} + A_{\sL} A_{\sR} \left( \sum_{p=0}^{j-1} M_{\sR \sL \sR}^p \right)
\left( I + A_{\sR} + A_{\sR} A_{\sL} \right) \nonumber \\
&\quad+A_{\sL} A_{\sR} M_{\sR \sL \sR}^j
\left( I + A_{\sR} + A_{\sR} A_{\sL}
\left( \sum_{p=0}^{k-j-2} M_{\sR \sL \sR}^p \right)
\left( I + A_{\sR} + A_{\sR} A_{\sL} \right) +
M_{\sR \sL \sR}^{k-j-1} \right) \;, \\
P_{\cS[k]^{(3k)}} &= \left( \sum_{p=0}^{k-1} M_{\sR \sL \sR}^p \right)
\left( I + A_{\sR} + A_{\sR} A_{\sL} \right) +
M_{\sR \sL \sR}^k \left( I + A_{\sR} \right) \;, \\
P_{\cS[k]^{(3k+1)}} &= I + A_{\sL} \left( \sum_{p=0}^{k-1} M_{\sR \sL \sR}^p \right)
\left( I + A_{\sR} + A_{\sR} A_{\sL} \right) +
A_{\sL} M_{\sR \sL \sR}^k \;.
\end{align}
}
\begin{align}
P_{\cS[k]^{(3j)}} &= \left( \sum_{p=0}^{j-1} M_{\sR \sL \sR}^p \right)
\left( I + A_{\sR} + A_{\sR} A_{\sL} \right) \nonumber \\
&\quad+M_{\sR \sL \sR}^j
\left( I + A_{\sR} + A_{\sR} A_{\sL}
\left( \sum_{p=0}^{k-j-1} M_{\sR \sL \sR}^p \right)
\left( I + A_{\sR} + A_{\sR} A_{\sL} \right) \right) \;.
\label{eq:PS3j}
\end{align}
Powers of $M_{\sR \sL \sR}$ are given by (\ref{eq:MRLRk}).
To obtain explicit expressions for the two finite series that appear in (\ref{eq:PS3j}),
we use the following formulas for the partial sum of a geometric series:
\begin{equation}
\sum_{p=0}^{j-1} \lambda_1^p = \frac{1 - \lambda_1^j}{1 - \lambda_1} \;, \qquad
\sum_{p=0}^{j-1} \lambda_1^{-p} = \frac{\lambda_1(\lambda_1^{-j}-1)}{1 - \lambda_1} \;.
\nonumber
\end{equation}
This gives
\begin{equation}
\sum_{p=0}^{j-1} M_{\sR \sL \sR}^p = \frac{1}
{\left( 1 + \frac{\delta_{\sR}}{(\delta_{\sR} - 1)^2} \right) \left( 1 - \lambda_1 \right)}
\left[ \begin{array}{cc}
1-\lambda_1^j + \frac{\delta_{\sR}}{(\delta_{\sR}-1)^2} \lambda_1(\lambda_1^{-j}-1)&
\frac{-\delta_{\sR}}{\delta_{\sR}-1} \left( 1-\lambda_1^j - \lambda_1(\lambda_1^{-j}-1) \right) \\
\frac{-1}{\delta_{\sR}-1} \left( 1-\lambda_1^j - \lambda_1(\lambda_1^{-j}-1) \right) &
\frac{\delta_{\sR}}{(\delta_{\sR}-1)^2} (1-\lambda_1^j) + \lambda_1(\lambda_1^{-j}-1)
\end{array} \right] \;.
\label{eq:sumMRLRj}
\end{equation}
Then (\ref{eq:Ps3j}) results by directly evaluating the determinant of
(\ref{eq:PS3j}) via the use of (\ref{eq:f}), (\ref{eq:paramRLRkLR}), (\ref{eq:MRLRk}) and (\ref{eq:sumMRLRj}).
(For simplicity the author achieved this using symbolic computations in {\sc matlab}\removableFootnote{
{\sc goSymF.m}.
}.)
\end{proof}

\begin{proof}[Proof of Theorem \ref{th:RLRkLR}]
By Lemma \ref{le:MS}, we have $0 < \det \left( M_{\cS[k]} \right) < 1$ and ${\rm trace} \left( M_{\cS[k]} \right) < 0$.
Thus for all $k \ge 1$, $\det \left( I - M_{\cS[k]} \right) \ne 0$, therefore by Lemma \ref{le:existence}, $\cS[k]$-cycles are unique.
Furthermore, we immediately see that the inequalities (\ref{eq:stabConditionSN}) and (\ref{eq:stabConditionNS}) hold for all $k \ge 1$.
The inequality (\ref{eq:stabConditionPD}) also holds for all $k \ge 1$ because by Lemma \ref{le:MS} we have
\begin{equation}
\det \left( M_{\cS[k]} \right) + {\rm trace} \left( M_{\cS[k]} \right) + 1 =
\frac{ \left( \delta_{\sR} + 1 \right)
\left( \left( \delta_{\sR}^2 + 1 \right)^{k+1} - \delta_{\sR}^{k+1} \right)}
{\delta_{\sR} \left( \delta_{\sR}^2 + 1 \right)^{k+1}} \;,
\nonumber
\end{equation}
which is positive.
Thus by Lemma \ref{le:stability} the $\cS[k]$-cycles are asymptotically stable
(assuming $\det \left( P_{\cS[k]^{(i)}} \right) \ne 0$ for all $i$, which is demonstrated below).

By Lemma \ref{le:admissibility}, for admissibility we require
\begin{equation}
\begin{split}
\det \left( P_{\cS[k]^{(3j)}} \right) &> 0 \;, {\rm ~for~} j = 0,\ldots,k-1 \;, \\
\det \left( P_{\cS[k]^{(3j+1)}} \right) &< 0 \;, {\rm ~for~} j = 0,\ldots,k-1 \;, \\
\det \left( P_{\cS[k]^{(3j+2)}} \right) &> 0 \;, {\rm ~for~} j = 0,\ldots,k-1 \;, \\
\det \left( P_{\cS[k]^{(3k)}} \right) &< 0 \;, \\
\det \left( P_{\cS[k]^{(3k+1)}} \right) &> 0 \;.
\end{split}
\label{eq:detPSi2}
\end{equation}
From (\ref{eq:Ps3j}) we can see that, as a function of $j$,
$\det \left( P_{\cS[k]^{(3j)}} \right)$ has a single turning point (at $j \approx \frac{k}{2}$)
that corresponds to a maximum.
Therefore, given $\delta_{\sR}$ and $k$,
over the range $j = 0,\ldots,k-1$, $\det \left( P_{\cS[k]^{(3j)}} \right)$ achieves its minimum at either $j = 0$ or $j = k-1$.
By (\ref{eq:Ps3j}), for $j = 0$:
\begin{align}
\det \left( P_{\cS[k]} \right) &=
\frac{\delta_{\sR}^4 + \delta_{\sR}^3 + \delta_{\sR} + 1 -
\left( \delta_{\sR}^4 + \delta_{\sR}^2 + \delta_{\sR} + 1 \right) \lambda_1^k}
{\left( \delta_{\sR}^2 - \delta_{\sR} + 1 \right) \left( \delta_{\sR}^2 + 1 \right)} \nonumber \\
&\ge \frac{\delta_{\sR}^2 \left( \delta_{\sR} - 1 \right)}
{\left( \delta_{\sR}^2 - \delta_{\sR} + 1 \right) \left( \delta_{\sR}^2 + 1 \right)} > 0 \;,
\nonumber
\end{align}
where we have substituted $k=0$ to produce the inequality.
Similarly for $j = k-1$:
\begin{align}
\det \left( P_{\cS[k]^{(3(k-1))}} \right) &=
\frac{\delta_{\sR}^4 + \delta_{\sR}^2 \left( \delta_{\sR} + 1 \right) \lambda_1^k
- \left( \delta_{\sR}^3 - 1 \right) \lambda^{k-1}}
{\left( \delta_{\sR}^2 - \delta_{\sR} + 1 \right) \left( \delta_{\sR}^2 + 1 \right)} \nonumber \\
&\ge \frac{\delta_{\sR}^4 - \delta_{\sR}^3 + 1}
{\left( \delta_{\sR}^2 - \delta_{\sR} + 1 \right) \left( \delta_{\sR}^2 + 1 \right)} > 0 \;.
\nonumber
\end{align}
Therefore $\det \left( P_{\cS[k]^{(3j)}} \right)$ is positive for all $\delta_{\sR} > 1$, $k \ge 1$ and $j = 0,\ldots,k-1$.
The remaining inequalities in (\ref{eq:detPSi2}) may be verified in the same fashion;
these calculations are omitted for brevity.
We then conclude that, for each $k \ge 1$, the unique $\cS[k]$-cycle is admissible and asymptotically stable.

Computations for $\cS'[k]$-cycles are analogous.
The key formulas are
\begin{align}
\det \left( M_{\cSp[k]} \right) &= \delta_{\sR}^2 \;, \nonumber \\ 
{\rm trace} \left( M_{\cSp[k]} \right) &= 
-\delta_{\sR} \lambda_1^k + \left( \delta_{\sR}^2 + \delta_{\sR} + 1 \right) \lambda_1^{-k} \;, \nonumber 
\end{align}
and
\begin{align}
\det \left( P_{\cSp[k]^{(3j)}} \right) &=
-\frac{1}{\delta_{\sR}^2-\delta_{\sR}+1} \bigg(
\delta_{\sR}^2 \left( \delta_{\sR}^2+\delta_{\sR}+1 \right) \left( \lambda_1^{-k}-\lambda_1^{-j}+\lambda_1^{k-j} \right) -
\delta_{\sR}^2 \left( \delta_{\sR}^2+1 \right) \nonumber \\
&\quad-\left( \delta_{\sR}^3-1 \right) \lambda_1^j \left( \lambda_1^{-k}-1 \right) -
\delta_{\sR}^3 \lambda_1^k \bigg) \;, \nonumber \\
\det \left( P_{\cSp[k]^{(3j+1)}} \right) &=
\frac{\delta_{\sR}}{\delta_{\sR}^2-\delta_{\sR}+1} \bigg(
\left( \delta_{\sR}^2+\delta_{\sR}+1 \right) \left( \delta_{\sR}^2+1 \right)
\left( \lambda_1^{-k}-\lambda_1^{-j}+\lambda_1^{k-j} \right) -
\left( \delta_{\sR}^2+1 \right)^2 \nonumber \\
&\quad-\delta_{\sR} \left( \delta_{\sR}^2+\delta_{\sR}+1 \right) \lambda_1^j \left( \lambda_1^{-k}-1 \right) -
\delta_{\sR} \left( \delta_{\sR}^2+1 \right) \lambda_1^k \bigg) \;, \nonumber \\
\det \left( P_{\cSp[k]^{(3j+2)}} \right) &=
-\frac{1}{\delta_{\sR} \left( \delta_{\sR}^2-\delta_{\sR}+1 \right)} \bigg(
\left( \delta_{\sR}^2+\delta_{\sR}+1 \right) \lambda_1^{-k} -
\frac{\delta_{\sR} \left( \delta_{\sR}^2+\delta_{\sR}+1 \right)}{\delta_{\sR}^2+1} \lambda_1^j \left( \lambda_1^{-k}-1 \right)
\nonumber \\
&\quad-\left( \delta_{\sR}^2+1 \right) \left( \delta_{\sR}^3-1 \right) \lambda_1^{-j} \left( \lambda_1^k-1 \right) -
\left( \delta_{\sR}^2+1 \right) - \delta_{\sR} \lambda_1^k \bigg) \;, \nonumber \\
\det \left( P_{\cSp[k]^{(3k)}} \right) &=
-\frac{1-\lambda_1^k}{\delta_{\sR}^2-\delta_{\sR}+1} \;, \nonumber \\
\det \left( P_{\cSp[k]^{(3k+1)}} \right) &=
-\frac{\delta_{\sR}^3 \left( 1-\lambda_1^k \right)}{\delta_{\sR}^2-\delta_{\sR}+1} \;, \nonumber 
\end{align}
where $\lambda_1$ is given by (\ref{eq:lambda1}).
From these formulas, uniqueness and admissibility of $\cS'[k]$-cycles for $k \ge 1$
follows in the same fashion as for $\cS[k]$-cycles.
\end{proof}

\section{Conclusions}
\label{sec:conc}

In this paper it is shown for the first time that the two-dimensional border-collision normal form (\ref{eq:f})
may have infinitely many coexisting attractors.
Theorem \ref{th:RLRkLR} states that with $\mu = 1$, $\delta_{\sR} > 1$ and (\ref{eq:paramRLRkLR}),
(\ref{eq:f}) has an attracting periodic solution with symbol sequence $\cS[k] = \left( \sR \sL \sR \right)^k \sL \sR$, for all $k \ge 1$.
Theorem \ref{th:RLRkLR} was proved by explicitly verifying all admissibility and stability conditions of the $\cS[k]$-cycles.

Fig.~\ref{fig:infFa} shows a plot of the $\cS[k]$-cycles with $\delta_{\sR} = \frac{3}{2}$.
As $k$ increases the $\cS[k]$-cycles approach an orbit that is homoclinic to an $\sR \sL \sR$-cycle.
Furthermore, the branches of the stable and unstable manifolds of the $\sR \sL \sR$-cycle that intersect are coincident.
In sections \ref{sec:necessary} and \ref{sec:further} it was shown that in general such coincidence is to be expected.
Given $\cX$ and $\cY$, if the $\cX$-cycle is of saddle-type and
(\ref{eq:f}) has infinitely many admissible, stable $\cX^k \cY$-cycles,
then, with some additional assumptions, (\ref{eq:f}) must display several important features.
Parts (\ref{it:gY}) and (\ref{it:lambda2}) of Theorem \ref{th:codim3} give three consequences
that imply that such coexistence is at least a codimension-three phenomenon.
In view of Theorem \ref{th:RLRkLR}, we conclude that the scenario is generically codimension-three.
By part (\ref{it:HCorbit}) of Theorem \ref{th:codim3}, the stable and unstable manifolds of the $\cX$-cycle intersect.
By Theorem \ref{th:coincident}, this intersection is non-transversal.

The results of \S\ref{sec:further} included the assumption that $(\cX \cY)_i \ne (\cY \cX)_i$ for $i=0$ and only one other index, call it $\alpha$.
It was shown that we expect the unstable manifold of the $\cX$-cycle to intersect the switching manifold at two points,
and for one of these points to map to the other under $\alpha$ iterations of (\ref{eq:f}).
By Theorem \ref{th:saddles}, $\cX^k \cY^{\overline{0}}$-cycles
limit to the homoclinic orbit of the $\cX$-cycle that includes these two points of intersection, as $k \to \infty$.

In \S\ref{sec:finding}, parameter values for which (\ref{eq:f}) exhibits infinite coexistence
were identified for three different combinations of $\cX$ and $\cY$.
These three examples each satisfy all the assumptions given in sections \ref{sec:necessary} and \ref{sec:further},
and the consequences listed above may be verified directly for these examples.

There are many avenues for future work.
It remains to remove some of the assumptions made in sections \ref{sec:necessary} and \ref{sec:further}, if possible,
and identify other mechanisms, if any exist, by which (\ref{eq:f}) may have infinitely many attractors.
As parameters are varied from a point at which there exist infinitely many attractors,
we would like to determine the rate at which the number of coexisting attractors decreases.
We would also like to understand exactly for which combinations of $\cX$ and $\cY$
(\ref{eq:f}) can exhibit infinitely many admissible, attracting $\cX^k \cY$-cycles.

Perhaps the most important problem that stems from this work is a generalization to the $N$-dimensional border-collision normal form.
In more than two dimensions calculations of periodic solutions can be performed in the same manner,
but stable and unstable manifolds of saddle-type periodic solutions may have a dimension greater than one, which presents more possibilities and difficulties.
Also, as noted in \cite{GlJe12,GlKo12}, it is not known how many attractors may be born simultaneously in grazing-sliding bifurcations.
The return map for grazing-sliding may be put in the border-collision normal form,
but it remains to demonstrate that parameter values that give rise to infinitely many coexisting attractors
are viable for grazing-sliding, and study the influence of higher-order terms.

\end{document}